\newcommand\Modified{01-04-2015, 2015 }
\newcommand\datver[1]{\def\datverp%
{\par\boxed{\boxed{\text{Version: #1; Run: \today}}}}}
\newtheorem{theorem}{Theorem}
\newtheorem{definition}[theorem]{Definition}
\newtheorem{corollary}[theorem]{Corollary}
\newtheorem{lemma}[theorem]{Lemma}
\newtheorem{proposition}[theorem]{Proposition}
\newtheorem{remark}[theorem]{Remark}
\numberwithin{equation}{section}
\numberwithin{theorem}{section}
\definecolor{darkgreen}{cmyk}{1,0,1,.2}
\definecolor{m}{rgb}{1,0.1,1}
\renewcommand{\tilde}{\widetilde}
\newcommand{\wt}[1]{\widetilde{#1}}
\newcommand\eps\varepsilon
\newcommand\pa{\partial}
\newcommand{\End}{\operatorname{End}}
\newcommand{\loc}{\operatorname{loc}}
\newcommand{\reg}{ \mathrm{reg} }
\newcommand{\supp}{\operatorname{supp}}
\DeclareMathAlphabet{\mathpzc}{OT1}{pzc}{m}{it}
\newcommand\paperintro%
\newcommand\paperbody%
\newcommand\bbC{\mathbb{C}}
\newcommand\bbP{\mathbb{P}}
\def\RR{\bbR}
\newcommand\bbR{\mathbb{R}}
\newcommand\cC{\mathcal{C}}
\newcommand\cD{\mathcal{D}}
\newcommand\cI{\mathcal{I}}
\newcommand\cK{\mathcal{K}}
\newcommand\cO{\mathcal{O}}
\newcommand\cU{\mathcal{U}}
\newcommand\cW{\mathcal{W}}
\newcommand\sing{\operatorname{sing}}
\newcommand\wtV{\widetilde{V}}
\begin{document}

\title{On the $L^2$-$\overline{\partial}$-cohomology of certain complete K\"ahler metrics}

\author{Francesco Bei}
\address{Institut Camille Jordan, Universit\'e de Lyon1}
\email{bei@math.univ-lyon1.fr }
\author{Paolo Piazza}
\address{Dipartimento di Matematica, Sapienza Universit\`a di Roma}
\email{piazza@mat.uniroma1.it}

\begin{abstract}
Let $V$ be a compact and irreducible complex space of complex dimension $v$ whose regular part is endowed with a complete Hermitian metric $h$. Let $\pi:M\rightarrow V$ be a  resolution of $V$. Under suitable assumptions on $h$ we prove that
$$H^{v,q}_{2,\overline{\pa}}(\reg(V),h)\cong H^{v,q}_{\overline{\pa}}(M),\  q=0,...,v.$$ Then we show that the previous isomorphism applies to the case of  Saper-type  K\"ahler  metrics, see  \cite{GMM}, and to the case of complete K\"ahler metrics with finite volume and pinched negative sectional curvatures.
\end{abstract}

\maketitle


\noindent\textbf{Keywords}: Saper-type K\"ahler  metrics, complete K\"ahler metrics with finite volume and pinched negative sectional curvatures, $L^2$-Dolbeault cohomology, complex spaces, resolution of singularities.
\vspace{0.3 cm}

\noindent\textbf{Mathematics subject classification (2010)}:   58J10,  32W05.

\tableofcontents

\paperintro
\section*{Introduction and main results}

Let $V$ be a complex  projective variety in $\bbC \bbP^m$ of complex dimension $v$. Let $\reg (V)$ be its regular part
and let $\sing (V)$ be its singular locus. More generally, let $N$ be a compact K\"ahler manifold of complex dimension $n$ with K\"ahler form $\omega$ and let $V$ be an  analytic subvariety  in $N$ of complex dimension $v$. One of the questions raised in \cite{CGM} concerns the existence of a complete K\"ahler metric on $\reg(V)$ whose $L^2$-cohomology is isomorphic to the middle perversity intersection cohomology of $V$. This problem has been investigated by Saper in \cite{LS} who provided an affirmative answer in the setting of isolated singularities. Labeling with $g_S$ the K\"ahler metric constructed by Saper, a problem related  to the previous one, is to understand the $L^2$-$\overline{\partial}$-cohomology of $g_S$.  For the case of $(v,q)$-forms this latter  problem has been addressed again by Saper in the setting of isolated singularities. He showed in \cite{LS} that
\begin{equation}
H^{v,q}_{2,\overline{\partial}}(\reg(V),g_S)\cong H^{v,q}_{\overline{\partial}}(M)
\end{equation}
where $\pi:M\rightarrow V$ is any resolution of $V$.  An analogous question,  but for the $L^2$-$\overline{\pa}$-cohomology of the incomplete K\"ahler metric  $g$ on $\reg(V)$ induced by the  K\"ahler metric on $N$, has been considered by  Pardon \cite{P}, 
Haskell \cite{PH}, Br\"uning-Peyerimhoff-Schr\"oder \cite{BPS} and Pardon-Stern \cite{PS} who finally solved the MacPherson conjecture \cite{Mac} by showing, more generally,  that $$H^{v,q}_{2,\overline{\partial}_{\min}}(\reg(V),g)\cong H^{v,q}_{\overline{\partial}}(M).$$

Subsequently, inspired by the work of Saper \cite{LS}, Grant Melles and Milman have constructed  a new family of complete K\"ahler metrics on $\reg (V)$, see \cite{GMM} and  \cite{GMMI}, without any assumption on the singular set of $V$.  The complete K\"ahler metrics built by Grant Melles and Milman on $\reg (V)$ are called 
{\it Saper-type K\"ahler metrics}.  The purpose  of this paper is to investigate the $L^2$-$\overline{\partial}$-cohomology groups of various complete Hermitian metrics that are defined on the regular part of a compact and irreducible complex space;
Saper-type K\"ahler  metrics are examples of such Hermitian metrics.
The paper is structured in the following way. In the first section we provide a general result in the  setting of compact and irreducible complex space whose regular part is endowed with a complete Hermitian metric. 
Our theorem reads as follow:
\begin{theorem}
\label{intro}
Let $V$ be a compact and irreducible complex space of complex dimension $v$. Let $\pi:M\rightarrow V$ be a  resolution of $V$ with $D:=\pi^{-1}(\sing(V))$ a  normal crossings divisor of $M$. Let $h$ be a complete Hermitian metric on $\reg(V)$ and let $\sigma$ be the complete Hermitian metric on $M\setminus D$ defined as $\sigma:=(\pi|_{M\setminus D})^*h$. Assume that:
\begin{itemize}
\item $\pi_*\cC^{v,q}_{D,\sigma}$ is a fine sheaf for each $q=0,...,v$,
\item for each $p\in \sing(V)$ there is an open neighborhood $U$ of p and a  $d$-bounded K\"ahler metric $g_{U}$ on $\reg(U)$ such that $h|_{U}$ and $g_{U}$ are quasi-isometric. 
\end{itemize}
 Then we have the following isomorphism for each $q=0,...,v$:
$$H^{v,q}_{2,\overline{\partial}}(\reg(V),h)\cong H^{v,q}_{\overline{\partial}}(M).$$
\end{theorem}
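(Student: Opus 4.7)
The plan is to establish the isomorphism via a fine-resolution argument on $V$, combined with Grauert-Riemenschneider vanishing pushed down from the resolution $M$. First I would observe that $\pi|_{M\setminus D}:(M\setminus D,\sigma)\to(\reg(V),h)$ is a biholomorphic isometry, so pullback identifies the complex of locally $L^2$ $(v,q)$-forms on $\reg(V)$ with the corresponding complex on $M\setminus D$; hence the global sections of $\pi_*\cC^{v,q}_{D,\sigma}$ on $V$ compute $H^{v,q}_{2,\ovl{\partial}}(\reg(V),h)$. Since $h$ is complete, the maximal and minimal closed extensions of $\ovl{\partial}$ coincide and the cohomology is unambiguous. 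The fineness hypothesis then makes $\pi_*\cC^{v,\bullet}_{D,\sigma}$ acyclic for the global sections functor, so $H^{v,q}_{2,\ovl{\partial}}(\reg(V),h)$ equals the $q$-th hypercohomology on $V$ of this complex.

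Next I would identify $\pi_*\cC^{v,\bullet}_{D,\sigma}$ as a resolution of $\pi_*\Omega^v_M$. The kernel computation in degree zero is elementary: for $\alpha=f\,dz_1\wedge\cdots\wedge dz_v$ the pointwise norm squared against any smooth Hermitian metric cancels the volume-form weight, so $|\alpha|_{\sigma}^2\,\dvol_\sigma=|f|^2\,d\mathrm{Leb}$ locally; a $\ovl{\partial}$-closed locally $L^2$ section is therefore a holomorphic function on $M\setminus D$ with a local $L^2$ bound, which by Riemann removable singularities extends holomorphically across $D$. Hence the kernel sheaf on $M$ is $\Omega^v_M$ and pushforward gives $\pi_*\Omega^v_M$. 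Exactness in positive degree then reduces, after passing to stalks at $p\in V$, to a local $L^2$-$\ovl{\partial}$-Poincar\'e lemma on shrinking neighborhoods. For $p\in\reg(V)$ this is classical; for $p\in\sing(V)$ the hypothesis supplies a complete $d$-bounded K\"ahler metric $g_U$ on $\reg(U)$ quasi-isometric to $h|_U$, and Gromov's vanishing theorem for complete K\"ahler manifolds with $d$-bounded K\"ahler form then yields $H^{v,q}_{2,\ovl{\partial}}(\reg(U),g_U)=0$ for $q\geq 1$, which the quasi-isometry transfers to $h$.

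To conclude I would invoke Grauert-Riemenschneider: since $R^i\pi_*\Omega^v_M=0$ for $i>0$, the Leray spectral sequence collapses to give $H^q(V,\pi_*\Omega^v_M)=H^q(M,\Omega^v_M)=H^{v,q}_{\ovl{\partial}}(M)$. Chaining the isomorphisms produces the required identification.

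The hard part will be the local vanishing at singular points. Gromov's theorem produces vanishing on the single neighborhood $U$ provided by the hypothesis, but the stalk at $p$ is the direct limit over a cofinal system of smaller neighborhoods $U'\subset U$, and one needs the same vanishing to persist on arbitrarily small $U'$: the primitive of the K\"ahler form of $g_U$ remains bounded on $U'$, but completeness of $g_U|_{U'}$ can fail. This is typically overcome by a cutoff argument exploiting completeness of $g_U$ at the scale of $U$ together with boundedness of the primitive, or by working directly with tubular neighborhoods of the exceptional divisor in $M$ via $\pi$ and exploiting the normal crossings hypothesis on $D$. Once this local Poincar\'e lemma is secured, the sheaf-theoretic machinery above delivers the theorem.
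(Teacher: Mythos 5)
Your overall architecture matches the paper's: identify the global sections of $\pi_*\cC^{v,\bullet}_{D,\sigma}$ with the $L^2$-$\overline{\pa}$-complex of $(v,\bullet)$-forms on $(\reg(V),h)$ (using completeness to avoid the max/min ambiguity), use the fineness hypothesis for acyclicity, identify the degree-zero kernel sheaf with $\pi_*\cK_M$ via regularity of distributional solutions of $\overline{\pa}\alpha=0$ plus the metric-independence of the local $L^2$-condition on $(v,0)$-forms and extension across the normal crossings divisor, and finish with Takegoshi/Grauert--Riemenschneider vanishing ($R^i\pi_*\cK_M=0$ for $i>0$) and Leray. All of this agrees with the paper (which cites Ruppenthal's $L^2$-extension theorem where you invoke Riemann removability; the content is the same in degree $(v,0)$).

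There is, however, a genuine gap exactly where you flag one: the local exactness of $\pi_*\cC^{v,\bullet}_{D,\sigma}$ at points of $\sing(V)$. You assert that Gromov's theorem yields $H^{v,q}_{2,\overline{\pa}}(\reg(U),g_U)=0$ for $q\geq 1$, but Gromov's vanishing requires \emph{completeness}, and $g_U$ is only quasi-isometric to $h|_{\reg(U)}$, which is complete toward $\sing(V)$ but not toward the boundary $\pa U$; so that vanishing is simply not available as stated, and your proposed remedies (``a cutoff argument'', ``tubular neighborhoods'') are not carried out. This is the heart of the proof, and the paper closes it with a specific construction: shrink $U$ so that it admits a proper holomorphic embedding $\phi:U\to B(0,c)\subset\bbC^n$, pull back the complete Ohsawa-type metric with K\"ahler form $-\sqrt{-1}\,\pa\overline{\pa}\bigl(-\log(c^2-|z|^2)\bigr)$ to obtain a $d$-bounded K\"ahler metric $\rho_U$ on $\reg(U)$, and set $\gamma_U:=\rho_U+g_U$. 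One checks that $\gamma_U$ is complete (Gordon's criterion, adding proper functions whose differentials are bounded in the respective summand metrics, using that the dual metric of a sum is dominated by each dual metric) and still $d$-bounded (sum of $d$-bounded forms, again via the dual-metric comparison). Gromov then gives $H^{v,q}_{2,\overline{\pa}}(\reg(U),\gamma_U)=0$ for $q>0$; since $h|_{\reg(U)}\leq C\gamma_U$, the one-directional norm comparison for $(v,q)$-forms with $q\geq 1$ sends a $\overline{\pa}$-closed form in $L^2(h)$ into $L^2(\gamma_U)$, where it acquires a primitive, and restricting that primitive to a relatively compact $W\Subset U$, where $\gamma_U$ and $h$ are quasi-isometric, yields the required local primitive in the maximal domain for $h$. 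Without this (or an equivalent) device your argument does not close.
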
  

\noindent
The sheaves $\cC^{v,q}_{D,\sigma}$ on $M$ are obtained by sheafification from the
presheaves $C^{v,q}_{D,\sigma}$  that assign to an open set 
$U\subset M$ the maximal domain  of $\overline{\partial}_{v,q,{\rm max}}$ on $(U\setminus U\cap D,
\sigma|_{U\setminus U\cap D})$.
For more on this definition 
and for the notion of $d$-bounded K\"ahler metric appearing in the second condition we refer the reader to  \eqref{sheafifi} and Def. \ref{bound} respectively.  As a consequence of Th. \ref{intro} we obtain  that
$$\overline{\pa}_{v} + \overline{\pa}^*_{v} : L^2 \Omega^{v,\bullet} (\reg(V),h)\to L^2 \Omega^{v,\bullet} (\reg(V),h)$$ is a Fredholm operator on its domain endowed with the graph norm, see \eqref{highland}. Moreover, under suitable assumptions, the groups $H^{v,q}_{2,\overline{\partial}}(\reg(V),h)$ are invariant under bimeromorphic maps.\\The second section of this paper is devoted to some applications of Th. \ref{intro}. Our goal here is to provide some examples of complete K\"ahler metrics that obey the hypothesis of Th. \ref{intro}. As a first application we discuss the case of  Saper-type K\"ahler metrics. Our main result is the following
\begin{theorem}\label{theo:Main}
Let $M$ be a compact K\"ahler manifold with K\"ahler form $\omega$ and let $V$ be
an analytic subvariety  of $M$ with complex dimension $v$. Denote now by  $\pi:\wtV\rightarrow V$  a  resolution of $V$. Finally let $g_S$ be a Saper-type metric  on $\reg (V)$ as in \cite{GMM}. 
Then  the following isomorphism holds:
\begin{equation}\label{main-Iso}
H^{v,q}_{2,\overline{\pa}} (\reg (V), g_S)\cong H^{v,q}_{\overline{\pa}} (\widetilde{V})\,
\end{equation}
for every $q=0,...,v.$
\end{theorem}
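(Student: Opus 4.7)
The plan is to apply Theorem \ref{intro} to $h=g_S$, reducing the statement to verifying its two hypotheses for a Saper-type K\"ahler metric. Fix a resolution $\pi:\widetilde{V}\to V$ with exceptional divisor $D=\pi^{-1}(\sing(V))$ of simple normal crossings type, and set $\sigma:=(\pi|_{\widetilde{V}\setminus D})^*g_S$; the resolution $\widetilde{V}$ plays the role of ``$M$'' in Theorem \ref{intro}.

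For the local quasi-isometry with a $d$-bounded K\"ahler metric, I would unpack the Grant Melles--Milman construction directly. In a small open neighbourhood $U$ of a point $p\in\sing(V)$, the Saper-type metric has the form $g_S|_U = \omega|_U + i\,\partial\bar\partial\,\psi_p$, where $\psi_p$ is a Saper-type weight built out of (iterated) logarithms of local defining functions for $\sing(V)\cap U$. Taking $g_U$ to be the K\"ahler metric on $\reg(U)$ with form
\[
\omega_U := i\,\partial\bar\partial\,\psi_p = d\Bigl(\tfrac{i}{2}(\bar\partial-\partial)\psi_p\Bigr),
\]
the completeness and asymptotic geometry of $g_S$ near $p$ are captured by $\omega_U$, so $g_S|_U$ and $g_U$ are quasi-isometric on $\reg(U)$. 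Pulling back to a chart of $\widetilde{V}$ adapted to $D$, the potential $\psi_p\circ\pi$ collapses to a finite sum of Poincar\'e--Lelong-type terms in $-\log|z_j|^2$, and a direct computation in these coordinates shows the primitive $\eta_p=\tfrac{i}{2}(\bar\partial-\partial)\psi_p$ is $g_U$-bounded, verifying $d$-boundedness in the sense of Definition \ref{bound}.

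For the fineness of $\pi_*\cC^{v,q}_{D,\sigma}$, I plan to build partitions of unity adapted to $\sigma$. The Poincar\'e-type asymptotic model for $\sigma$ near $D$, produced in the previous step, guarantees that any locally finite open cover of $\widetilde{V}$ admits a subordinate smooth partition of unity $\{\chi_\alpha\}$ whose restrictions to $\widetilde{V}\setminus D$ have uniformly $\sigma$-bounded differentials (near $D$ one builds $\chi_\alpha$ out of $-\log|z_j|^2$-type variables). Multiplication by such a $\chi_\alpha$ preserves the maximal domain of $\overline{\partial}_{v,q,\max}$ on every open set, so $\{\chi_\alpha\}$ acts on the presheaf $C^{v,q}_{D,\sigma}$ and hence on $\cC^{v,q}_{D,\sigma}$, exhibiting it as a fine sheaf; fineness is preserved by push-forward along the proper map $\pi$. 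The main obstacle will be the local modelling of $\psi_p$: one must trace the (possibly recursive) definition of the Saper-type weight through the resolution and verify that, in normal-crossings coordinates, it reduces to a sum of Poincar\'e-type potentials for which both the $d$-boundedness estimate and the construction of $\sigma$-bounded cutoffs become explicit. Once this model calculation is secured, both hypotheses of Theorem \ref{intro} hold and the isomorphism \eqref{main-Iso} follows immediately.
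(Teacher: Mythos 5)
Your overall strategy coincides with the paper's: both reduce the theorem to the two hypotheses of Theorem \ref{intro}. But your verification of those hypotheses is not actually carried out -- you yourself flag ``the local modelling of $\psi_p$'' as ``the main obstacle'' and defer it to an unperformed model calculation, and that calculation is precisely where all the analytic content of the theorem lives. The paper closes both gaps by citation rather than computation: the local quasi-isometry with a $d$-bounded (indeed Ohsawa) K\"ahler metric is exactly Propositions 8.10 and 9.11 of \cite{GMM} (recorded as Prop.~\ref{Ohsawasaper}), and fineness is obtained from \cite[Proposition 10.2.1]{GMMI}, which says that wedging with $d(f|_{\reg(U)})$ for $f$ smooth on the \emph{ambient} manifold preserves $L^2$ with respect to $g_S$; one then simply restricts a smooth partition of unity from the ambient manifold to $V$. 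If you intend to re-derive these facts yourself, you must actually do the coordinate computation, and as written your plan has two concrete weak points.

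First, your candidate local metric $g_U$ with form $i\,\partial\bar\partial\psi_p$ (discarding $\omega|_U$) is problematic: that form alone need not be positive definite in all directions (e.g.\ along directions tangent to the singular stratum the Saper weight contributes nothing), so the claimed quasi-isometry $g_S|_U\sim g_U$ is not justified; the correct local Ohsawa potential in \cite{GMM} incorporates a local potential for the ambient form as well. Second, your fineness argument proposes cutoffs built from $-\log|z_j|^2$-type variables near $D$, but the partition of unity must be subordinate to an \emph{arbitrary} open cover of $V$ and descend to continuous functions on $V$; functions of the $\log|z_j|$ variables alone cannot localize in the directions along $\sing(V)$, so you still need bump functions in the base directions together with an estimate showing their differentials are $\sigma$-bounded -- which is exactly the content of the cited GMM proposition. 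Until the local model is established (or the GMM results are invoked), the proof is incomplete.
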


The second example that we present are complete K\"ahler manifolds with finite volume and pinched negative sectional curvatures. According to a result of Siu-Yau \cite{SiuYau}, a complex manifold that carries a K\"ahler metric with these properties is  biholomorphic to the regular part of a complex projective variety with only isolated singularities.  In this setting, using our Th. \ref{intro},  we prove the following result:
\begin{theorem}
\label{negativek}
Let $(M,h)$ be a complete K\"ahler manifold of complex dimension $m$ with finite volume. Assume that the sectional curvatures of $(M,h)$ satisfies  $-b^2\leq \sec_h\leq -a^2$ for some constants $0<a\leq b$. Let $V\subset \mathbb{C}\mathbb{P}^n$ be the Siu--Yau compactification of $M$ and let $\pi:\tilde{V}\rightarrow V$ be a resolution of $V$. Then we have the following isomorphism for each $q=0,...,m$ $$H^{m,q}_{2,\overline{\pa}}(M,h)\cong H^{m,q}_{\overline{\pa}}(\tilde{V}).$$ 
\end{theorem}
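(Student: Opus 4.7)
The plan is to apply Theorem \ref{intro} to $V$ equipped with the complete K\"ahler metric $h$ transferred to $\reg(V)$ via the Siu--Yau biholomorphism. By \cite{SiuYau}, $V$ is a complex projective variety whose singular set consists of finitely many isolated points $\{p_1,\dots,p_N\}$, and there is a biholomorphism $M\cong \reg(V)$ identifying the ends of $M$ with punctured neighborhoods of the $p_i$; under this identification $h$ becomes a complete K\"ahler metric on $\reg(V)$, which I still call $h$. Choose, by Hironaka, a resolution $\pi:\tilde V\to V$ such that $D=\pi^{-1}(\sing(V))$ is a simple normal crossings divisor, and set $\sigma=(\pi|_{\tilde V\setminus D})^*h$. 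It then suffices to verify the two hypotheses of Theorem \ref{intro}, since $v=\dim_{\bbC} V = m$ and the conclusion reads exactly as desired.

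To verify hypothesis (ii), I would appeal to the asymptotic structure theorem for finite-volume complete K\"ahler manifolds with sectional curvatures in $[-b^2,-a^2]$: each end of $M$ is a cusp and, in local holomorphic coordinates centered at $p_i$, the metric $h$ is quasi-isometric to a standard Bergman/Poincar\'e-type K\"ahler cusp metric. Such model metrics are classically $d$-bounded: their K\"ahler form may be written $\omega_U = d\eta_U$ with $\eta_U$ of bounded $\omega_U$-norm, as one checks directly for the product Poincar\'e metric on $(\Delta^*)^k\times\Delta^{v-k}$, and this property descends to the nilmanifold quotients that arise as cusp cross-sections. Setting $g_U$ equal to this model metric gives a $d$-bounded K\"ahler metric on $\reg(U)$ quasi-isometric to $h|_U$, which is exactly condition (ii).

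For hypothesis (i), the pulled-back metric $\sigma$ is, in local coordinates on $\tilde V$ adapted to $D$, quasi-isometric to a product Poincar\'e metric on $(\Delta^*)^k\times\Delta^{v-k}$. To prove that $\pi_*\cC^{v,q}_{D,\sigma}$ is a fine sheaf it suffices to exhibit smooth partitions of unity on $\tilde V$ whose multiplication preserves the maximal domain of $\overline{\pa}_{v,q}$, and this reduces to showing that for $\phi\in\CIc(\tilde V)$ the form $\overline{\pa}\phi$ has bounded $\sigma$-norm. Since the Poincar\'e cometric on $(0,1)$-forms degenerates near $D$, any smooth $(0,1)$-form on $\tilde V$ that is bounded in a Hermitian metric extending smoothly across $D$ is \emph{a fortiori} bounded in the $\sigma$-cometric, so the required estimate is automatic and fineness follows.

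The main obstacle is the precise verification that $h$ is genuinely quasi-isometric, not merely asymptotic up to lower-order errors, to such a Poincar\'e/Bergman cusp model near each singular point; this uses the fine asymptotic analysis of cusp ends of finite-volume negatively curved complete K\"ahler manifolds developed by Siu--Yau and Mok. Once this local model is in place, both hypotheses of Theorem \ref{intro} hold and yield the isomorphism $H^{m,q}_{2,\overline{\pa}}(M,h)\cong H^{m,q}_{\overline{\pa}}(\tilde V)$ for every $q=0,\dots,m$.
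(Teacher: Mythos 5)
Your overall strategy (apply Theorem \ref{intro} via the Siu--Yau compactification) is the right one, but the step you yourself flag as ``the main obstacle'' is a genuine gap, and it is not one that can be patched along the lines you indicate. You ground \emph{both} hypotheses of Theorem \ref{intro} on the claim that, in holomorphic coordinates near each singular point $p_i$, the metric $h$ is quasi-isometric to a product Poincar\'e/Bergman cusp model on $(\Delta^*)^k\times\Delta^{v-k}$. No such quasi-isometry is available for general pinched negatively curved finite-volume K\"ahler metrics, and it already fails in the model locally symmetric case: a complex hyperbolic cusp has Heisenberg nilmanifold cross-sections, with the metric decaying at rate $e^{-t}$ on the contact distribution and $e^{-2t}$ in the vertical direction, which is not the anisotropy of a product Poincar\'e metric. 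Since everything downstream (the $d$-boundedness of the model, the degeneration of the cometric used for fineness) is conditional on this comparison, the argument as written does not close.

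The paper's proof avoids the issue entirely. For hypothesis (ii) it invokes Yeganefar's Lemma 3.2 in \cite{NY}, which produces directly a bounded continuous $1$-form $\theta$ with $d\theta=\omega$ outside a compact set; thus $h$ restricted to a punctured neighborhood of each $p_i$ is \emph{itself} the required $d$-bounded K\"ahler metric $g_U$ (note that Definition \ref{bound} only asks for a bounded, not smooth, primitive, precisely to accommodate this), and no model metric is needed. For hypothesis (i) the paper exploits the fact that $\sing(V)$ is a finite set of isolated points: one refines the cover so that no two members both meet $\sing(V)$ and builds a continuous partition of unity whose functions are identically $1$ near each singular point in their support; the differentials are then supported in a compact subset of $\reg(V)$, so $\|d\phi_i\|_{L^\infty\Omega^1(\reg(V),h)}<\infty$ trivially and Lemma \ref{finecriterion} applies. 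If you want to salvage your write-up, replace your cusp-model analysis by these two observations; your reduction of fineness to boundedness of $\overline{\pa}\phi$ is correct, but the bound should come from the support of $d\phi$, not from degeneration of the cometric.
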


\noindent
In the second section we also collect various consequences  of  Th. \ref{main-Iso} and Th. \ref{negativek}.
\vspace{1 cm}

\smallskip
\noindent
{\bf Acknowledgements.} 
This work was performed within the framework of the LABEX MILYON (ANR-10-LABX-0070) of Universit\'e de Lyon, within the program "Investissements d'Avenir" (ANR-11-IDEX-0007) operated by the French National Research Agency (ANR). Moreover the first author wishes to thank also {\it SFB 647: Raum-Zeit-Materie} for financial support.  Part of this work was done while the first author was visiting Sapienza Universit\`a di Roma whose hospitality and financial support are gratefully acknowledged. It is a pleasure  to thank Pierre Albin for interesting  discussions. We also wish to thank the referee of a first version of this
paper for very interesting remarks and suggestions.

\paperbody

\section{$d$-bounded K\"ahler forms and $L^2$-$\overline{\partial}$-cohomology}

We start with the following remarks about our notation. Let $(M,h)$ be a complex Hermitian manifold. For any $(p,q)$ the maximal extension of $\overline{\pa}_{p,q}$, labeled by $\overline{\pa}_{p,q,\max}:L^2\Omega^{p,q}(M,h)\rightarrow L^2\Omega^{p,q+1}(M,h)$, is the closed extension defined in the distributional sense: $\omega\in \mathcal{D}(\overline{\pa}_{p,q,\max})$ if $\omega\in L^2\Omega^{p,q}(M,h)$ and $\overline{\pa}_{p,q}\omega$, applied in the distributional sense, lies in $L^2\Omega^{p,q+1}(M,g)$.  The minimal extension of $\overline{\pa}_{p,q}$, labeled by $\overline{\pa}_{p,q,\min}:L^2\Omega^{p,q}(M,h)\rightarrow L^2\Omega^{p,q+1}(M,h)$, is defined as the graph closure of $\Omega^{p,q}_c(M)$ in $L^2\Omega^{p,q}(M,h)$ with respect to graph norm of $\overline{\pa}_{p,q}$. It is easy to check that in both cases we get a complex whose corresponding cohomology is denoted by $H^{p,q}_{2,\overline{\pa},\max/\min}(M,h)$. If $(M,h)$ is complete then it is well known that $\overline{\partial}_{p,q,\min}=\overline{\partial}_{p,q,\min}$ and we label this unique closed extension simply with $\overline{\pa}_{p,q}:L^2\Omega^{p,q}(M,h)\rightarrow L^2\Omega^{p,q+1}(M,h)$. Finally analogous notations and considerations hold for the operator $\overline{\pa}+\overline{\pa}^t$. Now we go on with the following definition.
\begin{definition}
\label{bound}
Let $(M,g)$ be a K\"ahler manifold and let $\omega$ be the corresponding K\"ahler form. We will say that $\omega$ is $d$-bounded if there exists a $1$-form $\eta\in L^{\infty}\Omega^1(M,g)$ such that $d\eta=\omega$ where $d\eta$ is understood in the distributional sense.
\end{definition}

\begin{remark}
The notion of $d$-bounded K\"ahler form has been introduced by Gromov in \cite{MGR}. The definition that we stated above is slightly more general than the original one because we do not require $\eta$ to be a smooth form. 
\end{remark}

\begin{definition}
\label{ohsawa}
Let $(M,g)$ be a K\"ahler manifold and let $\omega$ be the corresponding K\"ahler form. We will say that $g$
satisfies the Ohsawa condition if there exists a function $f\in C^{\infty}(M)$ such that $\frac{i}{2\pi}\partial\overline{\partial}f=\omega$ with $\partial f\in L^{\infty}\Omega^{1,0}(M,g)$. See for example \cite{GMM}.
\end{definition}

\begin{remark}
It is clear that if $g$ satisfies the Ohsawa condition then the corresponding K\"ahler form $\omega$ is $d$-bounded.
\end{remark}

We recall now from \cite{MGR} the following important result.
\begin{theorem}
\label{L2vanishing}
In the setting of definition \ref{bound}. Assume moreover that $(M,g)$ is complete and let $m$ be the complex dimension of $m$. Then  for any $(p,q)$ with $p+q\neq m$ we have $$H^{p,q}_{2,\overline{\partial}}(M,g)=0.$$
\end{theorem}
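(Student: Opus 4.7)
The plan is to reduce the vanishing of $H^{p,q}_{2,\overline\partial}(M,g)$ to the vanishing of $L^2$-harmonic $(p,q)$-forms via the completeness of $g$, and then to exploit the bounded primitive $\eta$ of $\omega$ together with the pointwise K\"ahler (Lefschetz) structure to force any such harmonic form to vanish when $p+q\neq m$. Since $(M,g)$ is complete, the $L^2$-Hodge--Kodaira decomposition yields $H^{p,q}_{2,\overline\partial}(M,g)\cong \mathcal{H}^{p,q}_{(2)}(M,g)$, and since $g$ is K\"ahler the identity $2\Delta_{\overline\partial}=\Delta_d$ together with the bidegree decomposition of $\Delta_d$-harmonic forms implies that every $\alpha\in \mathcal{H}^{p,q}_{(2)}(M,g)$ is both $d$-closed and $d^*$-closed in the $L^2$ sense.

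Now assume first that $p+q<m$ and let $\alpha\in \mathcal{H}^{p,q}_{(2)}(M,g)$. Using $d\alpha=0$ and $d\eta=\omega$ distributionally, one obtains
\begin{equation*}
L\alpha \;=\; \omega\wedge\alpha \;=\; d(\eta\wedge\alpha)\quad\text{in the distributional sense,}
\end{equation*}
while $\eta\wedge\alpha\in L^2$ because $\eta\in L^\infty$ and $\alpha\in L^2$. Hence $L\alpha$ lies in the image of the maximal extension of $d$. On the other hand $L$ commutes with $\Delta_d$ on a K\"ahler manifold, so $L\alpha$ is itself $L^2$-harmonic; harmonic forms being orthogonal to the closure of the image of $d$ in the complete $L^2$-Hodge decomposition, one concludes $L\alpha=0$, and the pointwise injectivity of $L$ on $(p,q)$-forms with $p+q<m$ forces $\alpha=0$. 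For $p+q>m$, apply the Hodge star: $*\alpha$ is an $L^2$-harmonic form of type $(m-q,m-p)$ with total degree $2m-(p+q)<m$, so the previous case applied to $*\alpha$ gives $*\alpha=0$ and therefore $\alpha=0$.

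The principal technical point is that $\eta$ is only assumed to be $L^\infty$ rather than smooth, so one has to justify carefully that $\eta\wedge\alpha$ belongs to the domain of $d_{\max}$ with $d_{\max}(\eta\wedge\alpha)=L\alpha$; this is the distributional identity $d(\eta\wedge\alpha)=\omega\wedge\alpha-\eta\wedge d\alpha=\omega\wedge\alpha$, whose right-hand side is in $L^2$. Once this is secured, completeness provides $d_{\max}=d_{\min}$ and the orthogonality between $L^2$-harmonic forms and the closure of the image of $d$ that closes the argument; the rest is the standard pointwise Lefschetz algebra. All the other ingredients, namely the $L^2$-Hodge decomposition and the commutation $[L,\Delta_d]=0$, are classical for complete K\"ahler manifolds and do not require the $d$-boundedness hypothesis.
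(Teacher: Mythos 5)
Your reconstruction follows the qualitative half of Gromov's Lefschetz argument (the paper itself simply cites Gromov's Theorem 1.4.A and notes that it survives an $L^\infty$ primitive $\eta$), and that half is carried out correctly: the identity $L\alpha=d(\eta\wedge\alpha)$ for a harmonic $\alpha$ with $p+q<m$, the harmonicity of $L\alpha$, the orthogonality of harmonic forms to $\overline{\operatorname{Ran}(d_{\min})}$, the pointwise injectivity of $L$ below the middle degree, and the Hodge-star reduction for $p+q>m$ are all sound, and you are right that the only point needing care in the generalized setting is the distributional Leibniz rule for $\eta\in L^\infty$. The gap is in your very first sentence: on a complete manifold the identification $H^{p,q}_{2,\overline{\partial}}(M,g)\cong\mathcal{H}^{p,q}_{(2)}(M,g)$ is \emph{not} a consequence of completeness. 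The weak Kodaira decomposition only gives $\ker\overline{\partial}_{p,q}=\mathcal{H}^{p,q}_{(2)}\oplus\overline{\operatorname{Ran}\overline{\partial}_{p,q-1}}$, hence
$H^{p,q}_{2,\overline{\partial}}\cong\mathcal{H}^{p,q}_{(2)}\oplus\bigl(\overline{\operatorname{Ran}\overline{\partial}_{p,q-1}}/\operatorname{Ran}\overline{\partial}_{p,q-1}\bigr)$,
and the second summand vanishes only if $\overline{\partial}_{p,q-1}$ has closed range (compare $H^1_{2,d}(\mathbb{R})\neq 0$ even though there are no $L^2$-harmonic $1$-forms). Your argument therefore kills only the \emph{reduced} $L^2$-cohomology, whereas the theorem asserts the vanishing of the unreduced groups — and the unreduced statement is exactly what is used later in the paper, where one needs an honest $L^2$ primitive $\mu$ with $\overline{\partial}\mu=\nu|_{\reg(U)}$, not merely $\nu$ lying in the closure of the range.

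The missing ingredient is the quantitative version of the same Lefschetz computation, which is what Gromov actually proves: for $p+q<m$ one has a uniform pointwise lower bound $|L\xi|\ge c\,|\xi|$ coming from the $\mathfrak{sl}_2$-structure of $\Lambda^{p,q}$, while writing $L\alpha=d(\eta\wedge\alpha)+\eta\wedge d\alpha$ and integrating by parts (using $\|\eta\|_{L^\infty}<\infty$, completeness, and the K\"ahler identity for $[L,d^*]$) yields $\|L\alpha\|^2\le C\,\|\alpha\|\,(\|d\alpha\|+\|d^*\alpha\|)$ for $\alpha$ in the joint domain. Combining the two gives the a priori estimate $\|\alpha\|\le C'(\|\overline{\partial}\alpha\|+\|\overline{\partial}^*\alpha\|)$ in every bidegree with $p+q\neq m$, i.e.\ a spectral gap for $\Delta_{\overline{\partial}}$; this simultaneously forces $\mathcal{H}^{p,q}_{(2)}=0$ and the closedness of the ranges, and only then does the unreduced cohomology vanish. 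Your observation about the distributional identity $d(\eta\wedge\alpha)=\omega\wedge\alpha-\eta\wedge d\alpha$ is precisely what must be checked to run this estimate when $\eta$ is only bounded measurable, so it is well placed — but it has to feed into the a priori inequality, not only into the soft orthogonality argument.
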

\begin{proof}
When $\eta$ is smooth this theorem is Th.1.4.A in \cite{MGR}. A careful look at the arguments used there shows that the same proof applies also in our slightly  more general setting. Finally we point out that if $g$ satisfies the Ohsawa condition then the theorem had been already proved in \cite{DF} and \cite{Takeo}.
\end{proof}

Furthermore, concerning $d$-bounded K\"ahler metrics, we have also the following basic properties.

\begin{proposition}
\label{pullback}
Let $f:M\rightarrow N$ be an holomorphic immersion between complex manifolds. Let $g$ be a $d$-bounded K\"ahler metric on $N$. Then $f^*g$ is a $d$-bounded K\"ahler metric on $M$.
\end{proposition}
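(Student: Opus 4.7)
The plan is to pull back the given primitive $\eta$ of $\omega$ along $f$ and verify that $\tilde\eta := f^*\eta$ satisfies the two conditions of Definition \ref{bound} for the metric $f^*g$ on $M$. First I would check that $f^*g$ is genuinely K\"ahler: since $f$ is a holomorphic immersion, $df_p$ is complex linear and injective at every $p\in M$, so $f^*g$ is a positive definite Hermitian metric, and its fundamental $(1,1)$-form is $f^*\omega$, which is closed by naturality $d(f^*\omega)=f^*(d\omega)=0$.

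Next, I would establish the pointwise bound $|\tilde\eta|_{f^*g}(p)\leq |\eta|_g(f(p))$. By construction $df_p$ is an isometric embedding of $(T_pM,f^*g)$ into $(T_{f(p)}N,g)$, so for any unit vector $v\in T_pM$ with respect to $f^*g$ the vector $df_p(v)$ is a unit vector in $T_{f(p)}N$ with respect to $g$; therefore
$$|\tilde\eta|_{f^*g}(p)=\sup_{|v|_{f^*g}=1}|\eta(df_p(v))|\leq \sup_{|w|_g=1}|\eta(w)|=|\eta|_g(f(p)).$$
Taking the essential supremum over $M$ yields $\|\tilde\eta\|_{L^\infty(M,f^*g)}\leq \|\eta\|_{L^\infty(N,g)}$, in particular $\tilde\eta\in L^\infty\Omega^1(M,f^*g)$.

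The final step is to verify that $d\tilde\eta=f^*\omega$ holds in the distributional sense on $M$. If $\eta$ were smooth this would be the standard naturality $d\circ f^*=f^*\circ d$; in our slightly more general setting, I would approximate $\eta$ locally on $N$ by a mollified sequence $\eta_k$ of smooth $1$-forms with uniformly bounded $L^\infty$-norm such that $\eta_k\to \eta$ in $L^1_{\mathrm{loc}}$ and $d\eta_k\to \omega$ in $L^1_{\mathrm{loc}}$ (the latter being easier because $\omega$ itself is smooth). Pulling back along $f$ and using that $f$ is locally, by the constant rank theorem, a smooth embedding onto a slice, one obtains $d(f^*\eta_k)=f^*(d\eta_k)$ as smooth forms on $M$; passing to the distributional limit on both sides gives $d\tilde\eta=f^*\omega$.

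The main obstacle is this last step: justifying that pullback of merely $L^\infty$ forms along an immersion commutes with the distributional exterior derivative. This is the only place where the weakened regularity assumption on $\eta$ (relative to Gromov's original definition) matters, and it is handled by the approximation procedure just outlined. Once this is in hand, $\tilde\eta$ is an $L^\infty$ primitive of the K\"ahler form $f^*\omega$, so $f^*g$ is $d$-bounded as claimed.
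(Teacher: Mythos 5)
Your overall route is exactly the paper's: pull back the bounded primitive $\eta$, observe that $df_p$ is a $(f^*g,g)$-isometry onto its image so that $|f^*\eta|_{f^*g}\le |\eta|_g\circ f$, and invoke the commutation of pullback with the distributional exterior derivative. The paper dispatches this last point with the sentence ``it is easy to check that the pullback through a smooth map commutes with the distributional action of $d$'' and stops there; you instead try to prove it by mollification, and that is where a genuine gap appears. When $\dim_{\bbC}M<\dim_{\bbC}N$ the image $f(M)$ is a Lebesgue-null subset of $N$, so convergence $\eta_k\to\eta$ in $L^1_{\loc}(N)$ (or a.e.\ on $N$) gives no control whatsoever over $f^*\eta_k$ on $M$; worse, for a merely a.e.-defined $L^\infty$ form $\eta$ the pullback $\tilde\eta=f^*\eta$ is not even well defined by restricting a representative, and the same objection applies to your pointwise computation of $|\tilde\eta|_{f^*g}(p)$, which evaluates $\eta$ at points of the null set $f(M)$. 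So ``passing to the distributional limit on the left-hand side'' is exactly the step that is not justified by what you have set up.

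The repair uses structure you have not exploited: since $d\eta=\omega$ is smooth, one can locally write $\eta=\eta_0+du$ with $\eta_0$ a smooth primitive of $\omega$ and $u$ Lipschitz (the difference $\eta-\eta_0$ is a distributionally closed $L^\infty$ form, hence locally $du$ with $u\in W^{1,\infty}_{\loc}=C^{0,1}_{\loc}$). One then \emph{defines} $f^*\eta:=f^*\eta_0+d(u\circ f)$; since $f$ is distance-nonincreasing from $(M,f^*g)$ to $(N,g)$, the function $u\circ f$ is Lipschitz with constant at most $\|du\|_{L^\infty(N,g)}$, so Rademacher's theorem gives $d(u\circ f)\in L^\infty\Omega^1(M,f^*g)$ with the desired bound, and $d(f^*\eta)=f^*(d\eta_0)=f^*\omega$ distributionally because $d\,d(u\circ f)=0$. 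With this decomposition your mollification does converge, but for the right reason ($u*\rho_k\to u$ locally uniformly), not because of $L^1_{\loc}$ convergence of $\eta_k$ on $N$. Alternatively one can simply note that in every application in the paper (the Ohsawa condition) $\eta$ is smooth, so the classical naturality $d\circ f^*=f^*\circ d$ suffices; but if the proposition is to be proved at the stated level of generality, the codimension issue must be addressed as above.
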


\begin{proof}
Let $\omega$ be the K\"ahler form of $g$ and let $h:=f^*g$. It is easy to check that in general the pullback through a smooth map commutes with the distributional action of the de Rham differential. Therefore the above statement  follows immediately noticing that if $\eta\in L^{\infty}\Omega^{1}(N,g)$  with $d\eta=\omega$ then $d(f^*\eta)=f^*\omega$ and $|f^*\eta|_h\leq |\omega|_g$ where $|\ |_h$ and $|\ |_g$ denote respectively the pointwise norm on $T^*M\otimes \mathbb{C}$ induced by $h$ and $g$.
\end{proof}

\begin{proposition}
\label{sum}
Let $M$ be a complex manifold and let $g$ and $h$ be two $d$-bounded K\"ahler metrics on $M$. Then the K\"ahler metric $\rho:=g+h$ is $d$-bounded as well. 
\end{proposition}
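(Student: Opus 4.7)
The plan is straightforward: I would produce an explicit primitive by simply adding the two given primitives, and then verify that the sum remains bounded with respect to the (larger) metric $\rho$.

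Concretely, let $\omega_g$ and $\omega_h$ denote the K\"ahler forms of $g$ and $h$, and let $\eta_g,\eta_h\in L^{\infty}\Omega^1(M)$ be $1$-forms, bounded with respect to $g$ and $h$ respectively, such that $d\eta_g=\omega_g$ and $d\eta_h=\omega_h$ in the distributional sense (as provided by Def.\ \ref{bound}). Since $\rho=g+h$ is K\"ahler with K\"ahler form $\omega_\rho=\omega_g+\omega_h$, setting $\eta_\rho:=\eta_g+\eta_h$ immediately yields $d\eta_\rho=\omega_\rho$ distributionally (distributional exterior derivative is linear). So the only issue is to show $\eta_\rho\in L^\infty\Omega^1(M,\rho)$.

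The key point is a pointwise comparison of the induced dual norms on $T^*M\otimes\C$. Because $g$ and $h$ are positive definite Hermitian forms on $TM$, the relations $g\leq g+h=\rho$ and $h\leq g+h=\rho$ hold pointwise as Hermitian forms on $TM$; inverting these inequalities for positive definite matrices yields the opposite inequality for the induced forms on the cotangent bundle, i.e.\
\[
|\alpha|_\rho \;\leq\; |\alpha|_g \qquad\text{and}\qquad |\alpha|_\rho \;\leq\; |\alpha|_h
\]
for every $\alpha\in T_x^*M\otimes\C$ and every $x\in M$. Applying this to $\eta_g$ and $\eta_h$ and using the triangle inequality gives
\[
|\eta_\rho|_\rho \;\leq\; |\eta_g|_\rho + |\eta_h|_\rho \;\leq\; |\eta_g|_g + |\eta_h|_h \;\in\; L^\infty(M),
\]
so $\eta_\rho\in L^\infty\Omega^1(M,\rho)$ and $\omega_\rho$ is $d$-bounded.

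There is no real obstacle; the only small point deserving care is the dual-norm comparison above, which is the reason $d$-boundedness behaves well under sums (and under pullback by immersions, as in Prop.\ \ref{pullback}). The same argument works verbatim if one of the two metrics is merely Hermitian and one takes $\eta_h=0$ when $\omega_h$ is already exact with bounded primitive, but we do not need this generalization here.
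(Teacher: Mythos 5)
Your proof is correct and follows essentially the same route as the paper: add the two primitives, observe that $g\leq\rho$ and $h\leq\rho$ reverse on the cotangent bundle (this dual-norm comparison is exactly the paper's Prop.~\ref{dual}), and conclude by the triangle inequality. No gaps.
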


\noindent
In order to prove the above proposition we need the following elementary result.
\begin{proposition}
\label{dual}
Let $M$ be a manifold and let $g_1$ and $g_2$ be two Riemannian metrics on $M$ such that $g_2\leq g_1$.  Let $g_1^*$ be the metric that $g_1$ induces on $T^*M$ and analogously let $g_2^*$ be the metric that $g_2$ induces on $T^*M$. Then we have $g_1^*\leq g_2^*$.
\end{proposition}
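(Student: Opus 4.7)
The plan is to reduce the proposition to a pointwise statement in linear algebra, since both sides of the desired inequality are tensorial and the hypothesis $g_2 \leq g_1$ is assumed pointwise. So fix $p \in M$ and work entirely on the finite-dimensional vector space $V := T_pM$ with its dual $V^* := T_p^*M$; set $g_i := (g_i)_p$ for $i=1,2$. The task reduces to showing: if $g_1, g_2$ are two positive definite inner products on $V$ with $g_2(v,v) \leq g_1(v,v)$ for all $v \in V$, then the dual inner products on $V^*$ satisfy $g_1^*(\xi,\xi) \leq g_2^*(\xi,\xi)$ for all $\xi \in V^*$.

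The cleanest route is the variational characterization of the dual norm. For any positive definite inner product $g$ on $V$ and any $\xi \in V^*$, one has
\begin{equation*}
g^*(\xi,\xi) \;=\; \sup_{0 \neq v \in V} \frac{\xi(v)^2}{g(v,v)}.
\end{equation*}
(This follows from Cauchy--Schwarz applied to the musical isomorphism: the sup is attained at $v = \xi^{\sharp_g}$, the $g$-dual of $\xi$.) Given this formula, the hypothesis $g_2(v,v) \leq g_1(v,v)$ immediately gives, for every fixed $v \neq 0$,
\begin{equation*}
\frac{\xi(v)^2}{g_1(v,v)} \;\leq\; \frac{\xi(v)^2}{g_2(v,v)},
\end{equation*}
and taking the supremum over $v$ on both sides produces $g_1^*(\xi,\xi) \leq g_2^*(\xi,\xi)$, which is exactly what we want.

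An alternative I could use, if I wanted to avoid invoking the variational formula, is simultaneous diagonalization: choose a basis $e_1,\dots,e_n$ of $V$ that is $g_1$-orthonormal and $g_2$-orthogonal, so that $g_2(e_i,e_i) = \lambda_i$ with $0 < \lambda_i \leq 1$ by hypothesis. Then on the dual basis $e^1,\dots,e^n$ one computes $g_1^*(e^i,e^j) = \delta_{ij}$ and $g_2^*(e^i,e^j) = \delta_{ij}/\lambda_i$, so $g_2^* - g_1^*$ is diagonal with entries $1/\lambda_i - 1 \geq 0$, giving $g_1^* \leq g_2^*$. There is no real obstacle here; the only thing to watch is that everything is done pointwise and the resulting inequality of tensor fields on $T^*M$ then follows immediately, so no global or regularity issues arise.
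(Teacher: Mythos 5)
Your proposal is correct, and your primary argument is a genuinely different (and slightly more elementary) route than the paper's. The paper argues operator-theoretically: it introduces $A\in\End(TM)$ with $g_2(\cdot,\cdot)=g_1(A\cdot,\cdot)$, observes that at each point $A$ is $g_1$-symmetric, positive, with eigenvalues bounded above by $1$, shows $g_2^*(\cdot,\cdot)=g_1^*((A^{-1})^t\cdot,\cdot)$, and concludes from the eigenvalues of $(A^{-1})^t$ being bounded below by $1$. Your second, fallback argument (simultaneous diagonalization in a $g_1$-orthonormal, $g_2$-orthogonal basis) is essentially this same proof in coordinates, since such a basis is exactly an eigenbasis of $A$. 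Your main argument, via the variational formula
\begin{equation*}
g^*(\xi,\xi)=\sup_{0\neq v\in V}\frac{\xi(v)^2}{g(v,v)},
\end{equation*}
bypasses the spectral theorem and the explicit operator relation altogether: monotonicity of the dual form follows in one line from monotonicity of each ratio, which makes the direction-reversal conceptually transparent and would extend verbatim to merely positive semidefinite or nonsmooth situations. What the paper's (and your alternative) approach buys instead is the explicit endomorphism relating $g_1^*$ and $g_2^*$, which is sometimes useful for quantitative statements, but for the inequality itself your sup-formula argument is complete and arguably cleaner. Both reductions to a pointwise linear-algebra statement are valid, as the hypothesis and conclusion are pointwise conditions on tensor fields.
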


\begin{proof}
Let $A\in \End(TM)$ such that $g_2(\cdot,\cdot )=g_1(A\cdot,\cdot )$. Then, for each $p\in M$, $A_p:T_p M\rightarrow T_p M$ is positive, symmetric 
with respect to $g_1$ and its eigenvalues are bounded above by 1. Let $A^{-1}$ be the inverse of $A$ and let $(A^{-1})^t\in \End(T^*M)$ be the transposed endomorphism of  $A^{-1}$.
An easy calculation of linear algebra shows that $g^*_2(\cdot ,\cdot )=g_1^*((A^{-1})^t \cdot,\cdot )$. Now, for each $p\in M$, $(A^{-1})^t_p:T^*_pM\rightarrow T^*_pM$ is positive, symmetric with respect to $g^*_1$ and its eigenvalues are bounded below by 1. This in turn implies immediately that  $g_1^*\leq g_2^*$ as 
required.
\end{proof}

\noindent
Now we give a proof of Prop. \ref{sum}

\begin{proof}
Let $\omega$ be the K\"ahler form of $g$ and analogously let  $\tau$ be the K\"ahler form of $h$. According to the assumptions there exist  $1$-forms $\alpha\in L^{\infty}\Omega^1(M,g)$,  $\beta\in L^{\infty}\Omega^1(M,h)$ such that $d\alpha = \omega$ and $d\beta=\tau$. Let us label by $\rho^*$, $g^*$ and $h^*$ the metrics on $T^*M\otimes \mathbb{C}$ induced respectively by $\rho$, $g$ and $h$. Clearly $g\leq \rho$ and $h\leq \rho$.  Then, using Prop. \ref{dual},  we have
\begin{align}
& \nonumber  (\rho^*(\alpha+\beta,\alpha+\beta))^{\frac{1}{2}}\leq (\rho^*(\alpha,\alpha))^{\frac{1}{2}}+(\rho^*(\beta,\beta))^{\frac{1}{2}}\leq (g^*(\alpha,\alpha))^{\frac{1}{2}}+(h^*(\beta,\beta))^{\frac{1}{2}}
\end{align}
Since $\alpha \in L^{\infty}\Omega^1(M,g)$ and $\beta \in L^{\infty}\Omega^1(M,h)$ we can  conclude that $\alpha+\beta\in L^{\infty}\Omega^1(M,\rho)$ as desired. 
\end{proof}

We go on by spending a few words about  resolution of singularities. We recall only what is strictly necessary for our purposes and we refer to the seminal work \cite{HH} and also to \cite{BM} for in-depth treatments of this topic. For a throughout discussion about  complex spaces we refer to the monographs  \cite{GFI} and \cite{GRRE}.\\

\medskip
Consider a compact and irreducible complex space $V$. Then, thanks to the fundamental work of Hironaka, we know that the singularities of $V$ can be resolved. More precisely there exists a compact complex manifold $M$, a divisor with only normal crossings $D\subset M$, a surjective and holomorphic map $\pi:M\rightarrow V$ such that $\pi^{-1}(\sing(V))=D$ and $\pi|_{M\setminus D}:M\setminus D\rightarrow \reg(V)$ is a biholomorphism. Moreover if $V\subset N$ is an analytic subvariety of a compact complex manifold then there exists a compact complex manifold $M$, a compact complex submanifold $Z\subset M$, a surjective  holomorphic map $\pi:M\rightarrow N$  and a divisor with only normal crossings $D\subset M$ such that $\pi^{-1}(\sing(V))=D$, $\pi|_{M\setminus D}:M\setminus D\rightarrow N\setminus \sing(V)$ is a biholomorphism and  $\pi|_{Z\setminus (Z\cap D)}:Z\setminus (Z\cap D)\rightarrow V\setminus \sing(V)$ is a biholomorphism. The latter is the so-called embedded desingularization.\\
 We introduce now some presheaves and the corresponding sheaves arising by sheafification.  Let $M$ be a compact complex manifold, $D\subset M$ a divisor with only normal crossings and $g$ any Hermitian metric on $M\setminus D$. Consider the  preasheaves $C^{p,q}_{D,g}$ on $M$ given by the assignments  
\begin{equation}
\label{presheaf}
C^{p,q}_{D,g}(U):=\{\mathcal{D}(\overline{\pa}_{p,q,\max})\ \text{on}\ (U\setminus U\cap D, g|_{U\setminus U\cap D})\};
\end{equation}
in other words to every open subset $U$ of $M$ we assign the maximal domain of $\overline{\pa}_{p,q}$ over $U\setminus (U\cap D)$ with respect to the Hermitian metric $g|_{U\setminus U\cap D}$.
The sheafification of $C^{p,q}_{D,g}$ is denoted by  $\cC^{p,q}_{D,g}$ and its sections over an open subset $U\subset M$ are
\begin{multline}\label{sheafifi}
\cC^{p,q}_{D,g}(U):=  \{s\in L^2_{\loc}\Omega^{p,q}(U\setminus U\cap D,g|_{U\setminus U\cap D})\ \text{such that for each }\ p\in U\ \text{there exists an}\\ \text{ open}\ \text{neighborhood}\ W\ \text{with}\ p\in W\subset\ U\ \text{such that}\ s|_{W\setminus W\cap D}\in  \mathcal{D}(\overline{\pa}_{p,q,\max})\ \text{on}\ (W\setminus W\cap D, g|_{W\setminus W\cap D})\}.
\end{multline}
We have now all the ingredients to state the main result of this section.

\begin{theorem}
\label{cohomology}
Let $V$ be a compact and irreducible complex space of complex dimension $v$. Let $\pi:M\rightarrow V$ be a  resolution of $V$ with $D:=\pi^{-1}(\sing(V))$ a normal crossings divisor in $M$. Let $h$ be a complete Hermitian metric on $\reg(V)$ and let $\sigma$ be the complete Hermitian metric on $M\setminus D$ defined as $\sigma:=(\pi|_{M\setminus D})^*h$. Assume that:
\begin{itemize}
\item $\pi_*\cC^{v,q}_{D,\sigma}$ is a fine sheaf for each $q=0,...,v$,
\item for each $p\in \sing(V)$ there is an open neighborhood $U$ and a  $d$-bounded K\"ahler metric $g_{U}$ on $\reg(U)$ such that $h|_{U}$ and $g_{U}$ are quasi-isometric. 
\end{itemize}
 Then we have the following isomorphism for each $q=0,...,v$:
$$H^{v,q}_{2,\overline{\partial}}(\reg(V),h)\cong H^{v,q}_{\overline{\partial}}(M).$$
\end{theorem}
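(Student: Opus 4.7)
The plan is a sheaf-theoretic argument identifying both sides with the cohomology of a fine resolution of the canonical sheaf $\Omega^v_M$. First I would verify that $\cC^{v,\bullet}_{D,\sigma}$ is a resolution of $\Omega^v_M$ on $M$. The inclusion $\Omega^v_M\hookrightarrow \cC^{v,0}_{D,\sigma}$ rests on the identity $|\alpha|^2_g\,dV_g = c_v\,\alpha\wedge\overline{\alpha}$ valid for any $(v,0)$-form $\alpha$ and any Hermitian metric $g$: the right-hand side is manifestly metric-independent, so holomorphic $v$-forms on the compact manifold $M$ are automatically locally $L^2$ with respect to $\sigma$, regardless of how $\sigma$ degenerates along $D$. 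For stalk exactness at a point $x\in D$ in degrees $q\geq 1$---the local $L^2$-$\overline{\partial}$-Poincar\'e lemma---I would use the second hypothesis: setting $p=\pi(x)\in\sing(V)$, pull the $d$-bounded K\"ahler metric $g_U$ on $\reg(U)$ back by the biholomorphism $\pi|_{M\setminus D}$. By Proposition~\ref{pullback} the pullback $\pi^*g_U$ is still a $d$-bounded K\"ahler metric, and the quasi-isometry between $h|_U$ and $g_U$ propagates to one between $\sigma$ and $\pi^*g_U$ on $\pi^{-1}(U)\setminus D$. Since quasi-isometric Hermitian metrics give the same $L^2$-forms with equivalent norms and $\overline{\partial}$ is metric-independent, Theorem~\ref{L2vanishing} applies in this model to yield vanishing in all bidegrees $(v,q)$ with $q\neq 0$ (since $v+q\neq v$ iff $q\neq 0$), producing the required local primitives.

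With a resolution in hand on $M$, I would push it forward to $V$. The hypothesis that $\pi_*\cC^{v,q}_{D,\sigma}$ is fine makes each term $\Gamma(V,-)$-acyclic, so the complex of global sections computes hypercohomology. Combined with the Grauert--Riemenschneider vanishing $R^i\pi_*\Omega^v_M=0$ for $i\geq 1$, this yields
\[
H^q\bigl(\Gamma(V,\pi_*\cC^{v,\bullet}_{D,\sigma})\bigr)
\;=\; \mathbb{H}^q(V,\pi_*\Omega^v_M)
\;=\; H^q(M,\Omega^v_M)
\;=\; H^{v,q}_{\overline{\partial}}(M).
\]
Meanwhile $\Gamma(V,\pi_*\cC^{v,q}_{D,\sigma})=\Gamma(M,\cC^{v,q}_{D,\sigma})$ is exactly the maximal domain of $\overline{\partial}_{v,q}$ on $(M\setminus D,\sigma)$; completeness of $\sigma$ identifies maximal and minimal closed extensions, and the biholomorphism $\pi|_{M\setminus D}$ transports the resulting complex to the $L^2$-$\overline{\partial}$-complex on $(\reg(V),h)$. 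Stringing the equalities together gives the claimed isomorphism.

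The main obstacle is the local $L^2$-$\overline{\partial}$-Poincar\'e lemma used for stalk exactness: Theorem~\ref{L2vanishing} requires a \emph{complete} $d$-bounded K\"ahler metric, while $g_U$ is given only on $\reg(U)$ and typically fails to be complete at the topological boundary $\partial U\cap\reg(V)$, even though it is complete ``in the direction of'' $\sing(V)\cap U$. Bridging this gap---for instance by constructing from the local data a globally complete $d$-bounded K\"ahler metric on $\reg(V)$ through a delicate gluing that preserves both the K\"ahler property and the bounded primitive (using Propositions~\ref{pullback} and~\ref{sum}), or by invoking a localized version of Gromov's argument that exploits completeness only near $\sing(V)$, or by replacing Gromov with a H\"ormander-type weighted $L^2$-estimate on carefully shrunken neighborhoods---is the analytic heart of the argument and the step requiring the most care.
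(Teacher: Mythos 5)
Your outline matches the paper's overall strategy (a fine resolution of the canonical sheaf whose global sections compute the $L^2$-$\overline{\partial}$-cohomology, combined with the Grauert--Riemenschneider/Takegoshi vanishing $H^q(M,\cK_M)\cong H^q(V,\pi_*\cK_M)$), but the step you yourself flag as the ``analytic heart'' is a genuine gap, and it is precisely the step the paper's proof is built around. The resolution is \emph{not} a delicate gluing or a localized Gromov argument: after shrinking $U$ one chooses a proper holomorphic embedding $\phi:U\to B(0,c)\subset\bbC^n$, pulls back the K\"ahler metric $g$ on the ball with potential $\psi=-\log(c^2-|z|^2)$ (which satisfies the Ohsawa condition, hence is $d$-bounded and complete) to get $\rho_U:=(\phi|_{\reg(U)})^*g$, and sets $\gamma_U:=\rho_U+g_U$. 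By Proposition~\ref{sum} the sum of two $d$-bounded K\"ahler metrics is $d$-bounded, and completeness of $\gamma_U$ follows from Gordon's criterion applied to $\beta_U+\tau_U$, the sum of a proper function coming from the ball and one coming from the complete metric $h$; each summand has bounded gradient with respect to $\gamma_U$ by Proposition~\ref{dual}. This is exactly the mechanism that repairs the incompleteness of $g_U$ at $\partial U\cap\reg(V)$ while preserving $d$-boundedness, so that Theorem~\ref{L2vanishing} applies to $(\reg(U),\gamma_U)$. Without this construction (or an equivalent substitute) your argument does not close.

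Two further points are missing even granting the vanishing for $\gamma_U$. First, $\gamma_U$ is \emph{not} quasi-isometric to $h$ on all of $\reg(U)$, only on $\reg(W)$ for $\overline{W}\subset U$; so one must transfer a class from $(\reg(A),h)$ into $L^2(\gamma_U)$ using the one-sided comparison $h|_{\reg(U)}\le\gamma_U$ and Proposition~\ref{hilbertcomparison} (valid only in bidegree $(v,q)$), solve $\overline{\partial}\mu=\nu$ there, and then restrict the primitive to a smaller $W\Subset U$ where the two metrics are quasi-isometric. This shrinking step is what makes the \emph{sheaf-level} exactness work and is absent from your plan. Second, for the kernel in degree $0$ you only verify the easy inclusion $\cK_M\subset\ker(\cC^{v,0}_{D,\sigma}\to\cC^{v,1}_{D,\sigma})$ via metric-independence of the $L^2$-norm on $(v,0)$-forms; the converse requires elliptic regularity to see that a kernel element is holomorphic on $W\setminus(W\cap D)$ and then an $L^2$-extension theorem (as in Ruppenthal) to extend it across $D$. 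Finally, a structural caveat: you propose to prove stalk exactness at points $x\in D$ of $M$, but the hypothesis only controls the metric on $\pi^{-1}(U)\setminus D$ for $U$ a neighborhood of $p\in\sing(V)$, i.e.\ on neighborhoods of whole fibers; the stalks that actually need to be computed are those of $\pi_*\cC^{v,q}_{D,\sigma}$ at points of $V$, which is why the paper runs the exactness argument on $V$ rather than on $M$ (and why your appeal to hypercohomology would in any case require $R^i\pi_*\cC^{v,q}_{D,\sigma}=0$, which reduces to the same fiberwise vanishing).
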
  

Before tackling  the proof we recall the following properties.

\begin{proposition}
\label{hilbertcomparison}
Let $M$ be a complex manifold of complex dimension $m$, let $(E,\rho)$ be a Hermitian  vector bundle on $M$  and let $g$ and $h$ be two Hermitian metrics on $M$. Then we have an equality of Hilbert spaces $$L^2\Omega^{m,0}(M,E,g)=L^2\Omega^{m,0}(M,E,h).$$ Assume now that $cg\geq h$ for some $c>0$. Then for each $q=1,...,m$ there exists a constant $\xi_q>0$ such that for every $s\in \Omega^{m,q}_c(M,E)$  we have 
\begin{equation}
\label{gamma}
\|s\|^2_{L^2 \Omega^{m,q}(M,E,g)}\leq \xi_q \|s\|^2_{L^2 \Omega^{m,q}(M,E,h)}.
\end{equation}
Therefore the identity $\Omega^{m,q}_c(M,E)\rightarrow \Omega^{m,q}_c(M,E)$ induces a continuous inclusion $$L^2 \Omega^{m,q}(M,E,h)\hookrightarrow L^2 \Omega^{m,q}(M,E,g)$$ for each $q=1,...,m$.
\end{proposition}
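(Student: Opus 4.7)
My plan is to reduce both statements to pointwise computations in local holomorphic coordinates and a $\rho$-unitary frame of $E$, exploiting the well-known fact that for $(m,0)$-forms on a complex $m$-manifold the product $|s|^2\,\dvol_g$ is metric-independent.

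For the first equality I would choose local coordinates $(z_1,\dots,z_m)$ and a local unitary frame $\{e_\alpha\}$ of $(E,\rho)$, and write an $(m,0)$-form as $s=\sum_\alpha f_\alpha\,dz_1\wedge\cdots\wedge dz_m\otimes e_\alpha$. Using that $|dz_1\wedge\cdots\wedge dz_m|^2_g=\det(g_{i\bar{j}})^{-1}$ while $\dvol_g=\det(g_{i\bar{j}})\,\prod_k\tfrac{i}{2}\,dz^k\wedge d\bar{z}^k$, the product
\[
|s|^2_{g,\rho}\,\dvol_g=\Big(\sum_\alpha |f_\alpha|^2\Big)\,\prod_{k=1}^m\tfrac{i}{2}\,dz^k\wedge d\bar{z}^k
\]
is manifestly independent of $g$, so integration yields the identity of Hilbert spaces $L^2\Omega^{m,0}(M,E,g)=L^2\Omega^{m,0}(M,E,h)$.

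For the second assertion I would first invoke Prop.~\ref{dual}: the hypothesis $h\le c\,g$ on $TM$ forces $g^*\le c\,h^*$ on $T^*M$. Simultaneously diagonalising $g^*$ and $h^*$ at a point, the eigenvalues $\lambda_i$ of $g^*$ relative to $h^*$ are bounded by $c$, so on $\Lambda^q T^{*0,1}M$ the induced eigenvalues $\lambda_{i_1}\cdots\lambda_{i_q}$ are bounded by $c^q$. Writing an $(m,q)$-form locally as $s=(dz_1\wedge\cdots\wedge dz_m)\wedge\omega$ with $\omega$ a $(0,q)$-form valued in $E$, the previous paragraph cancels the metric dependence of the $(m,0)$-factor, leaving $|s|^2_{g,\rho}\,\dvol_g=|\omega|^2_{g,\rho}\,\prod_k\tfrac{i}{2}\,dz^k\wedge d\bar{z}^k$ and analogously for $h$. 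Combining this with the pointwise estimate $|\omega|^2_{g,\rho}\le c^q\,|\omega|^2_{h,\rho}$ and integrating then gives \eqref{gamma} with $\xi_q=c^q$.

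For the continuous inclusion, I would simply observe that \eqref{gamma} is a norm inequality on the dense subspace $\Omega^{m,q}_c(M,E)$, so the identity on this subspace extends by continuity to a bounded linear map $L^2\Omega^{m,q}(M,E,h)\to L^2\Omega^{m,q}(M,E,g)$, which is injective because it agrees with the identity on representatives. The only mildly delicate step is the exterior-power bookkeeping used in the second paragraph; working in a frame that simultaneously diagonalises $g^*$ and $h^*$ reduces it to clean pointwise linear algebra, so no genuine obstacle is anticipated.
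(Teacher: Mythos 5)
Your argument is correct and is essentially the same computation that the paper's proof defers to (the paper simply cites the calculations in \cite{GMMI}, p.~145): metric-independence of $|s|^2\,\dvol$ for $(m,0)$-forms, and the eigenvalue comparison on $\Lambda^q T^{*0,1}M$ coming from Prop.~\ref{dual} for the $(0,q)$-factor. The constant $\xi_q=c^q$ and the density/extension argument for the continuous inclusion are all in order.
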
 

\begin{proof}
The statement   follows  by the computations carried out in \cite{GMMI} pag. 145. 
\end{proof}

\noindent
We are now in the position to prove Th. \ref{cohomology}.

\begin{proof}
Let $\pi:M\rightarrow V$ be a resolution of $V$. Let $\mathcal{K}_{M}$ be the canonical sheaf of $M$, that is, the sheaf whose sections over any open subset $U$ of $M$ are the holomorphic $(n,0)$-forms over $U$. Let us consider the following sheaf $\mathcal{K}_V:=\pi_*\mathcal{K}_M$. This is the so-called Grauert-Riemenschneider canonical sheaf introduced in \cite{GRI}. By the Takegoshi vanishing theorem, see \cite{TaKe}, we get  that $H^q(M,\mathcal{K}_M)\cong H^q(V,\mathcal{K}_V)$ for each $q=0,...,v$, see for instance \cite{JRu} for the details.  We are therefore left with the task of
showing  that $H^{q}(V,\mathcal{K}_{V})\cong H^{n,q}_{2,\overline{\pa}}(\reg(V),h)$ for each $q=0,...,n$.  To this end consider the complex of sheaves $\{\pi_*\cC^{v,q}_{D,\sigma}, q\geq 0\}$, see \eqref{sheafifi}, whose morphisms are those induced by the distributional action of $\overline{\partial}_{v,q}$. It is clear that the cohomology groups of the complex given by the global sections of $\{\pi_*\cC^{v,q}_{D,\sigma}, q\geq 0\}$, that is  
\begin{equation}
\label{globalsections}
0\rightarrow \pi_*\cC^{n,0}_{D,\sigma}(V)\rightarrow...\rightarrow \pi_*\cC^{n,q}_{D,\sigma}(V)\rightarrow...\rightarrow \pi_*\cC^{n,n}_{D,\sigma}(V)\rightarrow 0
\end{equation}
are $H^{n,q}_{2,\overline{\pa}}(\reg(V),h)$, $q=0,...,n$. Therefore our goal is  to show that the complex $\{\pi_*\cC^{v,q}_{{D,\sigma}},q\geq 0\}$ is a fine resolution of $\cK_{V}.$ Since we assumed that $\pi_*\cC^{v,q}_{D,\sigma}$ is a fine sheaf for each $q=0,...,v$  we have only to prove that $\{\pi_*\cC^{v,q}_{{D,\sigma}}, q\geq 0\}$ is a resolution of $\cK_V$. We start to tackle this problem by showing that $\{\pi_*\cC^{v,q}_{{D,\sigma}}, q\geq 0\}$ is an exact sequence of sheaves.
Let $p$ be any point in $V$. It is clear that if $p\in \reg(V)$ then the induced sequence at the level of stalks, $\{(\pi_*\cC^{v,q}_{{D,\sigma}})_p, q\geq 0\}$, is exact. Hence we can assume that $p\in \sing(V)$. As a  first step in order to show that  $\{(\pi_*\cC^{v,q}_{{D,\sigma}})_p, q\geq 0\}$ is exact for any $p\in \sing(V)$ we need to introduce an auxiliary complete K\"ahler metric on a neighborhood of $p$ that satisfies the assumptions of Th. \ref{L2vanishing}. This is done as follows.\\ According to the assumptions  we know that there exists a sufficiently small open neighbourhood $U$ of $p$ such that the restriction of $h$ to the regular part of $U$ is quasi-isometric to a K\"ahler  metric $g_{U}$ which  is $d$-bounded. Now, taking $U$ even smaller if necessary, we can assume that there exists a positive constant $c$, an integer $n>v$ and a proper holomorphic embedding  $\phi: U \longrightarrow B(0,c)$ where $B(0,c)$ is the ball in $\bbC^n$ centered in $0$  with radius $c$. Let $\psi:B(0,c)\rightarrow \mathbb{R}$ be defined as $\psi:=-(\log(c^2-|z|^2))$ and let $g$ be the K\"ahler metric on $B(0,c)$ whose K\"ahler form is given by $\sqrt{-1}\partial \overline{\partial}\psi$. It is easy to check that $g$ satisfies the Ohsawa condition and therefore in particular is $d$-bounded, see for instance \cite{PS} pag. 613.  This in turn implies that $\rho_{U}:=(\phi|_{\reg(U)})^*g$ is a $d$-bounded K\"ahler metric on $U$, see Prop. \ref{pullback}. Now we introduce the  following K\"ahler metric on $\reg(U)$:
\begin{equation}
\label{sgamma}
\gamma_U:=\rho_{U}+g_{U}
\end{equation}

Next we check that $\gamma_U$ satisfies the hypothesis of Th. \ref{L2vanishing}. We begin by proving that $\gamma_U$ is complete.  According to Gordon's Theorem, \cite{Gor} Theorem 2, this is equivalent to showing the existence of a positive, smooth  and proper function  $f:\reg(U)\rightarrow \bbR$ with bounded gradient. Let $b:B(0,c)\rightarrow \bbR$ be a smooth function which satisfies the condition of Gordon's Thereom with respect to the complete K\"ahler metric associated to the $(1,1)$-form $\sqrt{-1}\partial \overline{\partial}\psi$. Let $\beta_U:\reg(U)\rightarrow \bbR$ be defined as $(b\circ \phi)|_{\reg(U)}$. Let $\tau:\reg(V)\rightarrow \bbR$ be a smooth function which satisfies the condition of Gordon's Theorem with respect to the  metric $h$. Let us label by $\tau_{U}$ the restriction of $\tau$ to $\reg(U)$. We claim that $\beta_{U}+\tau_{U}$ is smooth, proper and with bounded gradient with respect to $\gamma_U$. We first show that $\beta_{U}+\tau_{U}$ has bounded gradient with respect to $\gamma_U$. Labeling  by $\gamma^*_U$, $g_{U}^*$ and $\rho_{U}^*$ the metrics induced respectively by $\gamma_U$, $g_{U}$ and $\rho_{U}$ on  $T^*\reg(U)$, our task is equivalent to showing that $\beta_{U}+\tau_{U}$ has bounded differential with respect to $\gamma^*_U$. By Prop. \ref{dual} we have $|d\tau_U|_{\gamma^*_U}\leq |d\tau_{U}|_{g_{U}^*}$ and $|d\beta_U|_{\gamma^*_U}\leq |d\beta_U|_{\rho_{U}^*}$. Therefore we have: $$|d\tau_U+d\beta_U|_{\gamma^*_U}\leq |d\tau_U|_{\gamma^*_U}+|d\beta_U|_{\gamma^*_U}\leq |d\tau_U|_{g_U^*}+|d\beta_U|_{\rho_U^*}.$$ Finally $|d\tau_U|_{g_U^*}$ is bounded because $|d\tau|_{h^*}$ is bounded  and $g_U$ and $h|_{\reg(U)}$ are quasi-isometric on $\reg(U)$.  Analogously $|d\beta_U|_{\rho_U^*}$ is bounded because $\beta_U=(b\circ \phi)|_{\reg(U)}$, $b$ satisfy the conditions of Gordon's Theorem with respect to $g$ and $\rho_U=(\phi|_{\reg(U)})^*g$. Clearly $\beta_U+\tau_U$ is smooth. It remains to show that it is proper. However, this is clear
because it is easy to see, from the very definition of $\beta_U$ and $\tau_U$, that if $\{p_j\}$ is a sequence of points in $\reg (U)$ converging
to a point $p$ in $\overline{\reg{U}} \setminus \reg(U)$, then $(\beta_U+\tau_U)(p_j)\to +\infty$ as $j\to +\infty$. Furthermore, according to Prop. \ref{sum},  we know that $\gamma_U$ is a $d$-bounded K\"ahler metric because it is  defined as the sum of two $d$-bounded K\"ahler metrics. Hence, by Th. \ref{L2vanishing},  we can conclude that $$H^{v,q}_{2,\overline{\partial}}(\reg(U),\gamma_U)=0$$ for $q>0$.
Now, equipped with the above vanishing result, we can come back to the complex of sheaves $\{\pi_*\cC^{v,q}_{{D,\sigma}},q\geq 0\}$. Let $p\in \sing(V)$. In order to conclude that the complex  $\{(\pi_*\cC^{v,q}_{{D,\sigma}})_p,q\geq 0\}$ is exact, it is enough to show that given any open neighborhood $A$ of $p$  every cohomology class in $H^{v,q}_{2,\overline{\pa}_{\max}}(\reg(A),h|_{\reg(A)})$ admits a representative that becomes exact when restricted to some open subset $W\subset A$ with $p\in W$. This is done as follows.  Consider again any point $p\in \sing(V)$ and any open neighborhood $A$ of $p$. Let $[\nu]\in H^{v,q}_{2,\overline{\pa}_{\max}}(\reg(A),h|_{\reg(A)})$. According to  \cite{BLHC} Th. 3.5 we know that $[\nu]$ admits a smooth representative that we label by $\nu$.  Let $U\subset A$ be an open neighborhood  of $p$ such that $h|_{\reg(U)}$ is quasi isometric to a $d$-bounded K\"ahler metric $g_U$.  Clearly we have $h|_{\reg(U)}\leq \gamma_U$, where $\gamma_U$ is defined as in \eqref{sgamma}. Hence by Prop. \ref{hilbertcomparison} we get that $\nu|_{\reg(U)}\in \ker(\overline{\pa}_{v,q})\subset L^2\Omega^{v,q}(\reg(U),\gamma_U)$. Thus, according to what we have just shown above, there exists $\mu\in \cD(\overline{\partial}_{v,q-1})\subset L^2\Omega^{v,q-1}(\reg(U),\gamma_U) $ such that $\overline{\partial}_{v,q-1}\mu=\nu|_{\reg(U)}$ in $L^2\Omega^{v,q}(\reg(U),\gamma_U)$. Let now $W$ be any open subset of $U$ such that $\overline{W}\subset U$ and $p\in W$. It immediate to check that $\gamma_U|_{\reg(W)}$ and $h|_{\reg(W)}$ are quasi-isometric. Therefore we have $\mu|_{\reg(W)}\in \cD(\overline{\partial}_{v,q-1,\max})\subset L^2\Omega^{v,q-1}(\reg(W),h|_{\reg(W)}) $ and $\overline{\partial}_{v,q-1,\max}(\mu|_{\reg(W)})=\nu|_{\reg(W)}$ in $L^2\Omega^{v,q}(\reg(W),h|_{\reg(W)})$ and so we can conclude that $\{\pi_*\cC^{v,q}_{D,\sigma},q\geq 0\}$ is an exact sequence of sheaves as required.\\Finally we finish the proof of Th. \ref{cohomology} by showing that $\pi_* \mathcal{K}_{M}$ is equal to the kernel
of the morphism $\pi_*\cC^{v,0}_{D,\sigma}\to  \pi_*\cC^{v,1}_{D,\sigma}$ induced by
$\overline{\pa}_{v,0}.$ To this aim we work on $M$ and indeed we show that $\mathcal{K}_{M}$ is equal to the kernel of the morphism $\cC^{v,0}_{D,\sigma}\to  \cC^{v,1}_{D,\sigma}$ induced by $\overline{\pa}_{v,0}.$ Consider any open subset $U$ of $M$ and let  $p$ be any point in $U$. Let $\alpha\in \cC^{v,0}_{D,\sigma} (U)$ be  such that  $\alpha\in \ker (\cC^{v,0}_{D,\sigma}\to  \cC^{v,1}_{D,\sigma})$.
Hence, there exists an open neighbourhood $W$ of $p$, with closure contained in $U$, such that $\alpha$ restricted to $W\setminus
(W\cap D)$, $\alpha |_{W\setminus (W\cap D)}$, lies in $L^2\Omega^{v,0} (W\setminus (W\cap D),\sigma |_{W\setminus (W\cap D)})$
and satisfies 
\begin{equation}\label{holo}
\overline{\pa}_{v,0,\max}\, \alpha |_{W\setminus (W\cap D)}=0\,.
\end{equation}
 Thus in turn implies that
\begin{equation}
\label{maxmax}
(\overline{\pa}_{v,0,\max})^*( \overline{\pa}_{v,0,\max} \alpha |_{W\setminus (W\cap D)})=0.
\end{equation}
Consider now the complex $\{\Omega^{v,*}_c (W\setminus (W\cap D)),\overline{\pa}_{v,*}\}$; it is well known that this is an elliptic complex 
and thus the associated laplacians $\Delta_{\overline{\pa},v,q}$ are elliptic for each $q$. By \eqref{maxmax} we know in particular that  $\alpha |_{W\setminus (W\cap D)}$
is in the null space of the maximal extension of $\Delta_{\overline{\pa},v,0}:L^2\Omega^{v,0} (W\setminus (W\cap D),\sigma |_{W\setminus (W\cap D)})\rightarrow L^2\Omega^{v,0} (W\setminus (W\cap D),\sigma |_{W\setminus (W\cap D)})$. Hence by elliptic regularity we can conclude that $\alpha |_{W\setminus (W\cap D)}$  is smooth, and thus, by \eqref{holo},  holomorphic
on $W\setminus (W\cap D)$. Summarizing: $\omega$ lies in $L^2\Omega^{v,0} (W\setminus (W\cap D), \sigma | _{W\setminus (W\cap D)})$
and it is holomorphic on $W\setminus (W\cap D)$. Now, if $W\cap D=\emptyset$ we can already conclude that $\omega$ is homolorphic in all of $W$. If $W\cap D\neq \emptyset$ let  $\lambda$ be an arbitrary Hermitian metric on $M$ and let us consider
$\lambda |_{W\setminus (W\cap D)}$. According to Prop. \ref{hilbertcomparison} we know that
  $$L^2\Omega^{v,0} (W\setminus (W\cap D),\sigma | _{W\setminus (W\cap D)})=L^2\Omega^{v,0} (W\setminus (W\cap D),\lambda | _{W\setminus (W\cap D)})\,.
$$
Thus $\alpha$ is in $L^2\Omega^{v,0} (W\setminus (W\cap D),\lambda| _{W\setminus (W\cap D)})$
and it is holomorphic on $W\setminus (W\cap D)$. Finally using an $L^2$-extension theorem as in \cite{JRu}  we can conclude that $\alpha$  extends as an holomorphic $(v,0)$-form in all of $W$. Replacing  $p$ with any other point  in $U$ and repeating the same argument  we can  conclude that  $\alpha$ is holomorphic in $U$ and, therefore, that it is an element of $\mathcal{K}_{M} (U)$. Clearly the other inclusion is trivial, that is: $\cK_{M}$ is a sub-sheaf of the kernel of the morphism $\cC^{v,0}_{D,\sigma}\to  \cC^{n,1}_{D,\sigma}$ which is induced by the distributional action of $\overline{\pa}_{v,0}$. Indeed if $\omega\in \mathcal{K}_{M}(U)$ then $\omega\in \mathcal{C}^{v,0}_{D,\sigma}(U)$ and the distributional action of $\overline{\pa}_{v,0}$ applied  to $\omega$ is equal to 0. We can thus conclude that $\{\pi_*\mathcal{C}^{v,q}_{D,\sigma},\ q\geq 0\}$ is a fine resolution of $\pi_*\mathcal{K}_{M}$ as desired.
\end{proof}

We give now a criterion which assures that the sheaves $\{\pi_*\mathcal{C}^{v,q}_{D,\sigma},q\geq 0\}$ are fine. 
\begin{lemma}
\label{finecriterion}
In the setting of Th. \ref{cohomology}. Assume that  given any open cover $\cU=\{U_{i}\}_{i\in I}$ of $V$ there exists a continuous partition of unity $\{\lambda_j\}_{j\in J}$ subordinate to $\cU$ such that for each $j\in J$ 
\begin{enumerate}
\item $\lambda_j|_{\reg(V)}$ is smooth
\item $\|d(\lambda_j|_{\reg(V)})\|_{L^{\infty}\Omega^1(\reg(V),h)}<\infty$.
\end{enumerate}
Then $\pi_*\mathcal{C}^{v,q}_{D,\sigma}$ is a fine sheaf for each $q=0,...,v$.
\end{lemma}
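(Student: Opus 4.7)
\smallskip
\noindent
\textbf{Proof proposal.} The plan is to realize each $\lambda_j$ in the hypothesized partition of unity as a sheaf endomorphism of $\pi_{*}\mathcal{C}^{v,q}_{D,\sigma}$, in the standard way that proves fineness for sheaves of differential forms with bounded partitions of unity. Concretely, given an open cover $\mathcal{U}=\{U_i\}_{i\in I}$ of $V$, I would pick the partition of unity $\{\lambda_j\}_{j\in J}$ subordinate to $\mathcal{U}$ whose existence is assumed, and for each $j$ define an endomorphism $m_j$ of $\pi_{*}\mathcal{C}^{v,q}_{D,\sigma}$ by pointwise multiplication: on sections over an open $U\subset V$, which by definition live on $\pi^{-1}(U)\setminus D$, multiply by the pullback $\widetilde\lambda_j := (\pi|_{M\setminus D})^{*}\lambda_j$. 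The properties $\supp\, m_j\subset U_j$ and $\sum_j m_j = \mathrm{id}$ then follow tautologically from the corresponding properties of $\{\lambda_j\}$, giving fineness.

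The one substantive step is to check that $m_j$ really maps $\pi_{*}\mathcal{C}^{v,q}_{D,\sigma}$ to itself. Fix an open $W\subset M$ and a section $\omega\in\mathcal{C}^{v,q}_{D,\sigma}(W)$. Near a given point $p\in W$ one has $\omega|_{W'\setminus (W'\cap D)}\in\mathcal{D}(\overline{\pa}_{v,q,\max})$ for some small neighborhood $W'\ni p$. Since $\pi|_{M\setminus D}$ is a biholomorphism and $\sigma=(\pi|_{M\setminus D})^{*}h$, pointwise norms transport isometrically: $|\widetilde\lambda_j|\leq 1$ and
\[
|d\widetilde\lambda_j|_{\sigma}(x)=|d\lambda_j|_{h}(\pi(x))\qquad\text{for }x\in M\setminus D,
\]
which is bounded by hypothesis (2). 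Hence $\widetilde\lambda_j\,\omega\in L^{2}\Omega^{v,q}(W'\setminus(W'\cap D),\sigma)$, and the Leibniz rule, valid in the distributional sense because $\widetilde\lambda_j$ is smooth on $M\setminus D$, gives
\[
\overline{\pa}_{v,q}(\widetilde\lambda_j\omega)=\overline{\pa}\widetilde\lambda_j\wedge\omega+\widetilde\lambda_j\,\overline{\pa}_{v,q}\omega,
\]
both terms of which lie in $L^{2}\Omega^{v,q+1}(W'\setminus(W'\cap D),\sigma)$ by the uniform bounds on $\widetilde\lambda_j$ and $d\widetilde\lambda_j$. Thus $\widetilde\lambda_j\omega$ lies in the maximal domain locally, i.e.\ in $\mathcal{C}^{v,q}_{D,\sigma}(W)$.

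Finally, multiplication by $\widetilde\lambda_j$ commutes with restriction to open subsets, so $m_j$ is a genuine sheaf endomorphism, and its support, in the sheaf-theoretic sense, is contained in $\supp(\lambda_j)\subset U_j$, regarded as a subset of $V$ under the identification giving $\pi_{*}\mathcal{C}^{v,q}_{D,\sigma}$. The relation $\sum_j m_j=\mathrm{id}$ holds because $\sum_j\lambda_j\equiv 1$ on $V$ and the sum is locally finite, so on each stalk only finitely many terms contribute. This exhibits the required partition of unity by endomorphisms, showing that $\pi_{*}\mathcal{C}^{v,q}_{D,\sigma}$ is fine for every $q=0,\dots,v$.

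The main (and essentially only) subtle point I expect is item (c) above, namely the transfer of the $L^\infty$ bound on $d\lambda_j$ from $(\reg(V),h)$ to $(M\setminus D,\sigma)$; everything else is bookkeeping. This transfer is immediate from $\sigma=(\pi|_{M\setminus D})^{*}h$ and the fact that pullback by a biholomorphism is an isometry on cotangent spaces.
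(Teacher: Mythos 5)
Your argument is correct and is exactly the standard multiplication-by-$\lambda_j$ argument that the paper's one-line proof (``this follows immediately from \eqref{sheafifi}'') leaves implicit: the only substantive point is that multiplication by a bounded smooth function with $h$-bounded differential preserves the local maximal domain of $\overline{\pa}$, via the distributional Leibniz rule, and you handle the transfer of the bound to $(M\setminus D,\sigma)$ correctly since $\pi|_{M\setminus D}$ is an isometry for $\sigma=(\pi|_{M\setminus D})^*h$. No discrepancy with the paper's intended proof.
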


\begin{proof}
This follows immediately from the description given in \eqref{sheafifi}.
\end{proof}

We proceed by showing some corollaries of Th. \ref{cohomology}.

\begin{corollary}
\label{fred1}
The unique closed extension of the operator $\overline{\pa}_{v} + \overline{\pa}^t_{v} : \Omega^{v,\bullet}_c (\reg(V),h)\to 
\Omega^{v,\bullet}_c (\reg(V),h) $, denoted here
\begin{equation}
\label{highland}
\overline{\pa}_{v} + \overline{\pa}^*_{v} : L^2 \Omega^{v,\bullet} (\reg(V),h)\to L^2 \Omega^{v,\bullet} (\reg(V),h) 
\end{equation}
is a Fredholm operator on its domain endowed with the graph norm.
\end{corollary}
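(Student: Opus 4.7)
The plan is to combine Theorem \ref{cohomology} with the abstract theory of Hilbert complexes of Br\"uning--Lesch \cite{BLHC}. Since $h$ is complete, $\overline{\pa}_{v,q,\min}=\overline{\pa}_{v,q,\max}$ for every $q$, so $(L^2\Omega^{v,\bullet}(\reg(V),h),\overline{\pa}_v)$ is a Hilbert complex in the sense of \cite{BLHC}, and the operator in \eqref{highland} is precisely its associated Dirac-type operator equipped with the graph-norm domain.

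First I would invoke Theorem \ref{cohomology} together with the compactness of $M$: since $H^{v,q}_{\overline{\pa}}(M)$ is finite-dimensional by classical Hodge theory, the isomorphism produced there forces every unreduced $L^2$-Dolbeault cohomology group $H^{v,q}_{2,\overline{\pa}}(\reg(V),h)$ to be finite-dimensional as well. The next step, which I view as the main point, is to invoke the ``weak Hodge theorem'' for Hilbert complexes: finite-dimensionality of the unreduced cohomology in every degree implies that each differential $\overline{\pa}_{v,q}$ has closed range, that reduced and unreduced cohomology coincide, and that one obtains an $L^2$ Hodge--Kodaira decomposition
$$
L^2\Omega^{v,q}(\reg(V),h)=\mathcal{H}^{v,q}\oplus\Image(\overline{\pa}_{v,q-1})\oplus\Image(\overline{\pa}^*_{v,q+1}),
$$
with $\mathcal{H}^{v,q}:=\ker\overline{\pa}_{v,q}\cap\ker\overline{\pa}^*_{v,q-1}$ canonically isomorphic to $H^{v,q}_{2,\overline{\pa}}(\reg(V),h)$, hence finite-dimensional.

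Once this is in place the conclusion is routine. The kernel of \eqref{highland} is $\bigoplus_q\mathcal{H}^{v,q}$ and is finite-dimensional; the image is the orthogonal sum of the (now closed) ranges of $\overline{\pa}_v$ and $\overline{\pa}^*_v$ and is therefore closed; and by self-adjointness of \eqref{highland} the cokernel is isomorphic to the kernel. Hence \eqref{highland} is Fredholm on its graph domain. The only genuinely delicate point in this strategy is the passage from finite-dimensional unreduced cohomology to closedness of the differentials; this is exactly what the Br\"uning--Lesch framework delivers, and every other step is bookkeeping.
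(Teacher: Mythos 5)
Your proposal is correct and follows essentially the same route as the paper, which likewise deduces the result from the finite dimensionality of $H^{v,q}_{2,\overline{\pa}}(\reg(V),h)$ (via Theorem \ref{cohomology} and the compactness of $M$) together with Theorem 2.4 of Br\"uning--Lesch \cite{BLHC}. You simply spell out the Hodge--Kodaira decomposition and the kernel/cokernel bookkeeping that the paper leaves implicit.
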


\begin{proof}
This follows immediately from the  finite dimensionality of  $H^{v,q}_{2,\overline{\pa}} (\reg(V), h)$ and Theorem 2.4 in \cite{BLHC}.
\end{proof}

\begin{corollary}
\label{fred2}
For each $q=0,...,v$ we have the following isomorphism: $$H^{0,q}_{2,\overline{\pa}}(\reg(V),h)\cong H^{0,q}_{\overline{\pa}}(M).$$ In particular we have $$\chi(M,\cO_M)=\chi_2(\reg(V),h)$$ where the term on the right-hand side  is defined as $\sum (-1)^q\dim(H^{0,q}_{2,\overline{\pa}}(\reg(V),h))$.
\end{corollary}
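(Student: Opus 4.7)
The plan is to reduce the $(0,q)$ isomorphism to the $(v,v-q)$ case supplied by Theorem~\ref{cohomology}, using Serre duality on both the compact manifold $M$ and the complete Hermitian manifold $(\reg(V),h)$. On $M$, classical Serre duality yields
\begin{equation*}
H^{0,q}_{\overline{\pa}}(M)\ \cong\ H^{v,v-q}_{\overline{\pa}}(M)^{*},
\end{equation*}
while on $(\reg(V),h)$ I would invoke its $L^{2}$-analogue: the pairing
\begin{equation*}
(\alpha,\beta)\ \longmapsto\ \int_{\reg(V)}\alpha\wedge\beta,
\end{equation*}
which is well defined on $L^{2}\Omega^{0,q}\times L^{2}\Omega^{v,v-q}$ by Cauchy--Schwarz, descends to cohomology by Stokes' theorem in $L^{2}$ (valid on complete manifolds), and becomes perfect under a suitable closed-range hypothesis on $\overline{\pa}$.

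Closed range for $\overline{\pa}$ on $(v,\bullet)$-forms is supplied by Corollary~\ref{fred1}; the corresponding statement on $(0,\bullet)$-forms follows by transporting via the Hodge star and complex conjugation, using $\overline{\pa}^{*}=-*\partial*$ and $\overline{\partial\alpha}=\overline{\pa}\bar{\alpha}$ to move closed range from $(v,q)\to(v,q+1)$ over to $(0,v-q-1)\to(0,v-q)$. Granting this, the $L^{2}$-Serre duality
\begin{equation*}
H^{0,q}_{2,\overline{\pa}}(\reg(V),h)\ \cong\ H^{v,v-q}_{2,\overline{\pa}}(\reg(V),h)^{*}
\end{equation*}
will combine with Theorem~\ref{cohomology} applied in bidegree $(v,v-q)$ and with classical Serre duality to give
\begin{equation*}
H^{0,q}_{2,\overline{\pa}}(\reg(V),h)\ \cong\ H^{v,v-q}_{\overline{\pa}}(M)^{*}\ \cong\ H^{0,q}_{\overline{\pa}}(M)^{**}\ =\ H^{0,q}_{\overline{\pa}}(M),
\end{equation*}
the last equality being finite-dimensional reflexivity. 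Taking $\sum_{q}(-1)^{q}\dim$ on both sides will then yield the Euler characteristic identity $\chi(M,\mathcal{O}_{M})=\chi_{2}(\reg(V),h)$.

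The hard step is the $L^{2}$-Serre duality itself: in the present non-K\"ahler setting the Hodge star does not preserve $\overline{\pa}$-harmonicity, so one cannot simply set up the pairing on harmonic representatives via $*$. Instead, one must argue via closed range (which does transfer under $*$ and conjugation) together with the Hodge--Kodaira decomposition furnished by completeness and Corollary~\ref{fred1}; these ingredients are standard but need to be put together with some care.
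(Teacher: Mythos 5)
Your argument is correct and is essentially the paper's: the authors deduce the corollary from Theorem \ref{cohomology} together with $L^2$-Serre duality on the complete manifold $(\reg(V),h)$ (which they simply cite from Ruppenthal \cite{JRu}) and ordinary Serre duality on $M$, exactly as you propose. The only inessential point is your worry about the Hodge star in the non-K\"ahler setting: the conjugate-linear operator $\bar{*}$ intertwines $\Delta_{\overline{\pa}}$ on $(p,q)$- and $(v-p,v-q)$-forms for an arbitrary Hermitian metric, so $\overline{\pa}$-harmonicity is in fact preserved, and in any case the closed-range input you need is available once Theorem \ref{cohomology} gives finite-dimensionality of $H^{v,\bullet}_{2,\overline{\pa}}(\reg(V),h)$.
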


\begin{proof}
This follows from Th. \ref{cohomology} and the $L^2$ version of  Serre duality, see \cite{JRu}, which tells us that $H^{v,q}_{2,\overline{\partial}}(\reg(V),h)\cong H^{0,q}_{2,\overline{\partial}}(\reg(V),h)$. 
\end{proof}

\begin{corollary}
\label{fred3}
The unique closed extension of the operator $\overline{\pa}_{0} + \overline{\pa}^t_{0} : \Omega^{0,\bullet}_c (\reg(V),h)\to 
\Omega^{0,\bullet}_c (\reg(V),h) $, denoted here
$$\overline{\pa}_{0} + \overline{\pa}^*_{0} : L^2 \Omega^{0,\bullet} (\reg(V),h)\to L^2 \Omega^{0,\bullet} (\reg(V),h) $$
is a Fredholm operator on its domain endowed with the graph norm.
\end{corollary}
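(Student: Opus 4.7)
The plan is to mimic the argument used for Corollary \ref{fred1}, only now applied to $(0,\bullet)$-forms instead of $(v,\bullet)$-forms. The key observation is that Corollary \ref{fred2} has just reduced the computation of the $L^2$-Dolbeault cohomology in bidegree $(0,q)$ to the ordinary Dolbeault cohomology $H^{0,q}_{\overline{\pa}}(M)$ of the compact complex manifold $M$, which is automatically finite dimensional for each $q=0,\dots,v$. Hence we have at our disposal the finite dimensionality statement that fuels the Fredholm argument.

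First I would note that $(\reg(V),h)$ is complete by hypothesis, so $\overline{\pa}_{0,q,\min}=\overline{\pa}_{0,q,\max}$ and therefore the operator $\overline{\pa}_0+\overline{\pa}_0^t$ acting on $\Omega^{0,\bullet}_c(\reg(V),h)$ admits a unique closed extension to $L^2\Omega^{0,\bullet}(\reg(V),h)$; this is exactly the setting in which Theorem 2.4 of \cite{BLHC} applies. Second, combining Corollary \ref{fred2} with the compactness of $M$, I would record that
\[
\dim H^{0,q}_{2,\overline{\pa}}(\reg(V),h)=\dim H^{0,q}_{\overline{\pa}}(M)<\infty
\]
for each $q=0,\dots,v$.

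Third, I would quote Theorem 2.4 of \cite{BLHC} verbatim: on a complete Hermitian manifold, if every cohomology group of the $L^2$-$\overline{\pa}$-complex in a fixed form-degree column is finite dimensional, then the associated Hodge--Dolbeault operator is Fredholm on its domain with the graph norm. Applying this with the column $(0,\bullet)$ and the finiteness just established delivers the Fredholmness of the unique closed extension of $\overline{\pa}_0+\overline{\pa}_0^t$, which is precisely the conclusion.

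I do not anticipate any real obstacle: the hard analytic input (finite dimensionality of the relevant $L^2$-cohomology) has already been absorbed into Corollary \ref{fred2}, and the passage from finite dimensional cohomology to Fredholmness is a direct citation of \cite{BLHC}, exactly as in the proof of Corollary \ref{fred1}. The only thing to be slightly careful about is to make sure the cited theorem is invoked in the $(0,\bullet)$-column rather than the $(v,\bullet)$-column, but this is purely a matter of labeling since the hypotheses of that theorem are symmetric in the columns.
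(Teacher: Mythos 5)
Your argument is exactly the paper's: finite dimensionality of $H^{0,q}_{2,\overline{\pa}}(\reg(V),h)$ (obtained from Corollary \ref{fred2} and the compactness of $M$) combined with Theorem 2.4 of \cite{BLHC}, just as in Corollary \ref{fred1}. The proposal is correct and matches the paper's proof.
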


\begin{proof}
As for Cor. \ref{fred1} this follows immediately  by the finite dimensionality of  $H^{0,q}_{2,\overline{\pa}} (\reg(V),h)$ and Theorem 2.4 in \cite{BLHC}.
\end{proof}

\begin{corollary}
\label{fred4}
In the setting of Th. \ref{cohomology}. Assume moreover that $h$ is K\"ahler. Then  $H^{q,0}_{2,\overline{\pa}}(\reg(V),h)$ is finite dimensional for each $q=0,...,v$. Moreover $$\overline{\pa}_{q,0}:L^2\Omega^{q,0}(\reg(V),h)\rightarrow L^2\Omega^{q,0}(\reg(V),h)$$ that is the unique closed extension of $\overline{\pa}_{q,0}:\Omega^{q,0}_c(\reg(V))\rightarrow \Omega^{q,1}_c(\reg(V))$, has closed range.
\end{corollary}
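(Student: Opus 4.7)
The plan is to reduce this corollary to Corollary~\ref{fred3} by transporting everything from bidegree $(q,0)$ to bidegree $(0,q)$ via complex conjugation, and then invoking the K\"ahler identities.

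First I would exploit complex conjugation $C\colon\omega\mapsto\overline{\omega}$, which defines an antilinear isometry from $L^2\Omega^{q,0}(\reg(V),h)$ to $L^2\Omega^{0,q}(\reg(V),h)$. On smooth compactly supported forms the pointwise identity $\overline{\overline{\pa}\omega}=\pa\overline{\omega}$ shows that $C$ intertwines $\overline{\pa}_{q,0}$ with $\pa_{0,q}$. Because $h$ is complete, the minimal and maximal $L^2$-extensions of $\overline\pa$ (and of $\pa$) coincide, and the intertwining passes to the unique closed extensions. Consequently $\overline{\pa}_{q,0}$ and $\pa_{0,q}$ have antilinearly isomorphic kernels and either both have closed range or neither does. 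The task thus reduces to proving that $\pa_{0,q}\colon L^2\Omega^{0,q}(\reg(V),h)\to L^2\Omega^{1,q}(\reg(V),h)$ has finite-dimensional kernel and closed range.

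Next I would invoke the K\"ahler identity $\Delta_{\pa}=\Delta_{\overline\pa}$, together with the observation that on $(0,q)$-forms the component $\pa\pa^*$ of $\Delta_{\pa}$ vanishes, since $\pa^*$ sends $(0,q)$ to $(-1,q)=0$. This yields, at the level of the $L^2$ self-adjoint closures,
$$\pa^*_{0,q}\pa_{0,q}=\Delta_{\overline\pa}\big|_{(0,q)}.$$
Corollary~\ref{fred3} supplies the Fredholm property of $\overline\pa_0+\overline\pa^*_0$, so its square $\Delta_{\overline\pa}|_{(0,\bullet)}$ is Fredholm in every bidegree, with closed range and finite-dimensional kernel. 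Using the standard equivalence that a closed densely defined operator $T$ has closed range if and only if $T^*T$ does, I would deduce the same properties for $\pa_{0,q}$, and transporting back through $C$ concludes the same for $\overline{\pa}_{q,0}$. Finite dimensionality of $H^{q,0}_{2,\overline\pa}(\reg(V),h)=\ker(\overline{\pa}_{q,0})$ follows, with the natural identification with $H^{0,q}_{\overline\pa}(M)$ already supplied by Corollary~\ref{fred2}.

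The step I expect to demand the most care is the passage from the K\"ahler identities, which are pointwise identities between formal differential operators on smooth compactly supported forms, to genuine equalities of self-adjoint closures in $L^2$. This rests on the essential self-adjointness of $\Delta_{\overline\pa}$ and $\Delta_{\pa}$ on the complete Riemannian manifold $(\reg(V),h)$, which is the only place where completeness of $h$ plays an essential analytic role.
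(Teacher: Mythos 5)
Your argument is correct and structurally the same as the paper's: an (anti)unitary bidegree swap, the K\"ahler identity $\Delta_{\pa}=\Delta_{\overline{\pa}}$ (valid on a bidegree where one of the two compositions in $\Delta_{\pa}$ vanishes) promoted to an equality of closed extensions via completeness, and the Fredholmness of a Laplacian already secured by Theorem \ref{cohomology} together with the Br\"uning--Lesch criterion. The only difference is the choice of swap: you conjugate $(q,0)$ into $(0,q)$ and quote Corollary \ref{fred3}, whereas the paper applies the Hodge star to pass from $(v,q)$ to $(v-q,0)$ and uses the Fredholmness of $\Delta_{v,q,\overline{\pa}}$ directly; since Corollary \ref{fred3} is itself deduced from the $(v,\bullet)$ case by $L^2$-Serre duality, the two routes are essentially equivalent.
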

\begin{proof}
We consider $ \Delta_{v,q,\overline{\pa}}:\Omega^{v,q}_c(\reg(V))\rightarrow \Omega^{v,q}_c(\reg(V))$,
with $\Delta_{v,q,\overline{\pa}}:=\overline{\pa}^t_{v,q} \overline{\pa}_{v,q}+\overline{\pa}_{v,q}
\overline{\pa}^t_{v,q}$. According to Th. 2.4 in \cite{BLHC} we know that the unique closed extension of this operator, labeled here by 
\begin{equation}
\label{giove}
\Delta_{v,q,\overline{\pa}}:L^2\Omega^{v,q}(\reg(V),h)\rightarrow L^2\Omega^{v,q}(\reg(V),h),
\end{equation} is a Fredholm operator on its domain endowed with the graph norm. 
Consider now   
\begin{equation}
\label{venere}
\Delta_{v-q,0,\pa}:L^2\Omega^{v-q,0}(\reg(V),h)\rightarrow L^2\Omega^{v-q,0}(\reg(V),h)
\end{equation}
 that is, the operator defined as the unique closed extension of $\Delta_{v-q,0,\pa}$ on $\Omega^{v-q,0}_c(\reg(V))$.
 It is easy to see that the Hodge star operator $*:L^2\Omega^{v,q}(\reg(V),h)\rightarrow L^2\Omega^{v-q,0}(\reg(V),h)$ makes \eqref{giove} and \eqref{venere} unitarily equivalent. On the other hand since $(\reg(V),h)$ is K\"ahler we have $\Delta_{v-q,0,\pa}=\Delta_{v-q,0,\overline{\pa}}$ on $\Omega^{v-q,0}(\reg(V))$. This in turn implies that we have an equality of operators acting on $L^2\Omega^{v-q,0}(\reg(V),h)$ that is $\Delta_{v-q,0,\pa}:L^2\Omega^{v-q,0}(\reg(V),h)\rightarrow L^2\Omega^{v-q,0}(\reg(V),h)$ coincides with $\Delta_{v-q,0,\overline{\pa}}:L^2\Omega^{v-q,0}(\reg(V),h)\rightarrow L^2\Omega^{v-q,0}(\reg(V),h)$. Hence we can conclude that $\Delta_{v-q,0,\overline{\pa}}:L^2\Omega^{v-q,0}(\reg(V),h)\rightarrow L^2\Omega^{v-q,0}(\reg(V),h)$ is a Fredholm operator on its domain endowed with the graph norm. Moreover the completness of $(\reg(V),h)$ assures us that $\Delta_{v-q,0,\overline{\pa}}:L^2\Omega^{v-q,0}(\reg(V),h)\rightarrow L^2\Omega^{v-q,0}(\reg(V),h)$ coincides with $$\overline{\pa}_{v-q,0}^*\circ\overline{\pa}_{v-q,0}:L^2\Omega^{v-q,0}(\reg(V),h)\rightarrow L^2\Omega^{v-q,1}(\reg(V),h)$$ where $\overline{\pa}_{v-q,0}:L^2\Omega^{v-q,0}(\reg(V),h)\rightarrow L^2\Omega^{v-q,1}(\reg(V),h)$ is the unique closed extension of $\overline{\pa}_{v-q,0}:\Omega^{v-q,0}_c(\reg(V))\rightarrow \Omega^{v-q,1}_c(\reg(V))$ and $\overline{\pa}_{v-q,0}^*:L^2\Omega^{v-q,1}(\reg(V),h)\rightarrow L^2\Omega^{v-q,0}(\reg(V),h)$ is its Hilbert space adjoint. In conclusion this shows that on $L^2\Omega^{v-q,0}(\reg(V),h)$ we have $$\ker(\Delta_{v-q,0,\overline{\pa}})=\ker(\overline{\pa}_{v-q,0})=H^{v-q,0}_{2,\overline{\pa}}(\reg(V),h).$$ We can thus conclude that $H^{v-q,0}_{2,\overline{\pa}}(\reg(V),h)$ is finite dimensional. Finally  that  $$\overline{\pa}_{v-q,0}:L^2\Omega^{v-q,0}(\reg(V),h)\rightarrow L^2\Omega^{v-q,1}(\reg(V),h)$$ has closed range can be easily shown using the fact that  $\Delta_{v-q,0,\overline{\pa}}:L^2\Omega^{v-q,0}(\reg(V),h)\rightarrow L^2\Omega^{v-q,0}(\reg(V),h)$ is a Fredholm operator on its domain endowed with the graph norm arguing  for instance as in \cite{FB} Cor. 4.1.
\end{proof}

\noindent
We end this section with the following consequence.
\begin{corollary}
\label{bir}
Let $V$ and $W$ be a pair of compact and irreducible complex spaces of complex dimension $v$ whose regular parts are endowed with complete Hermitian metrics $h$ and $g$ respectively. Assume that both $(V,h)$ and $(W,g)$ satisfy the assumptions of Th. \ref{cohomology}. Assume moreover that $V$ and $W$ are bimeromorphic. Then for each $q=0,...,v, $
\begin{equation*}
 H^{v,q}_{2,\overline{\partial}}(\reg(V,h))\cong H^{v,q}_{2,\overline{\partial}}(\reg(W,g))\\
\quad \quad \quad H^{0,q}_{2,\overline{\partial}}(\reg(V,h))\cong H^{0,q}_{2,\overline{\partial}}(\reg(W,g)).
\end{equation*}
\end{corollary}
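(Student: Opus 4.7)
The plan is to chain the identifications furnished by Theorem \ref{cohomology} and Corollary \ref{fred2} with the bimeromorphic invariance of the coherent cohomology of the canonical sheaf on a compact complex manifold. First I would fix resolutions $\pi_V:M_V\to V$ and $\pi_W:M_W\to W$ with respect to which the hypotheses of Theorem \ref{cohomology} are satisfied; by the hypothesis of the corollary such resolutions exist. Applying Theorem \ref{cohomology} together with Corollary \ref{fred2} then produces the identifications
$$H^{v,q}_{2,\overline{\pa}}(\reg(V),h)\cong H^q(M_V,\mathcal{K}_{M_V}),\qquad H^{0,q}_{2,\overline{\pa}}(\reg(V),h)\cong H^q(M_V,\mathcal{O}_{M_V}),$$
and the analogous statements for $(W,g)$ with $M_V$ replaced by $M_W$.

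Next I would exploit that $V$ and $W$ being bimeromorphic forces $M_V$ and $M_W$ to be bimeromorphic compact complex manifolds. Resolving the closure of the graph of the bimeromorphism (and, if necessary, blowing up further to reach smoothness and normal crossings) produces a compact complex manifold $Z$ equipped with bimeromorphic holomorphic maps $\mu_V:Z\to M_V$ and $\mu_W:Z\to M_W$. By the Grauert--Riemenschneider direct image theorem applied to each such $\mu$ between compact complex manifolds with smooth target one has $R^i\mu_*\mathcal{K}_Z=0$ for $i>0$ and $\mu_*\mathcal{K}_Z=\mathcal{K}_{M}$. The Leray spectral sequence therefore collapses to give
$$H^q(M_V,\mathcal{K}_{M_V})\cong H^q(Z,\mathcal{K}_Z)\cong H^q(M_W,\mathcal{K}_{M_W}),$$
which, combined with the identifications of the previous paragraph, yields the first isomorphism asserted by the corollary.

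For the isomorphism in bidegree $(0,q)$ I would invoke Serre duality on the smooth compact complex manifold $M_V$, namely $H^q(M_V,\mathcal{O}_{M_V})\cong H^{v-q}(M_V,\mathcal{K}_{M_V})^{*}$, and analogously on $M_W$. Combining this with the bimeromorphic invariance of $H^{v-q}(-,\mathcal{K})$ just established, and with the second identification in the display above, yields the second isomorphism of the corollary.

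The step I expect to require the most care is the invocation of Grauert--Riemenschneider to secure the bimeromorphic invariance of $H^{q}(M,\mathcal{K}_M)$ in the category of compact complex manifolds that need not be projective, together with the explicit construction, via Hironaka, of a common smooth bimeromorphic model $Z$ of $M_V$ and $M_W$. The remainder is a purely formal chain of Theorem \ref{cohomology}, Corollary \ref{fred2} and Serre duality.
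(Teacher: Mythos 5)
Your proposal is correct and follows essentially the same route as the paper: identify the $L^2$ groups with $H^q(\cdot,\mathcal{K})$ and $H^q(\cdot,\mathcal{O})$ of the resolutions via Theorem \ref{cohomology} and Corollary \ref{fred2}, and then invoke the bimeromorphic invariance of these groups for compact complex manifolds. The only difference is that the paper simply asserts $H^{v,q}_{\overline{\pa}}(M)\cong H^{v,q}_{\overline{\pa}}(N)$ for bimeromorphic resolutions, whereas you supply the standard justification (common smooth model, Takegoshi/Grauert--Riemenschneider vanishing, Leray, and Serre duality for the $(0,q)$ case).
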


\begin{proof}
Let $\pi:M\rightarrow V$ be a resolution of $V$ and analogously let $p:N\rightarrow W$ be a resolution of $W$. Since we assumed that $V$ and $W$ are bimeromorphic we have $H^{v,q}_{\overline{\pa}}(M)\cong H^{v,q}_{\overline{\pa}}(N)$ for each $q=0,...,v$. Now the conclusion follows by Th. \ref{cohomology} and Cor. \ref{fred2}.
\end{proof}

\section{Applications}
The aim of this section is to collect some  examples of Hermitian complex spaces where  Th. \ref{cohomology} and its corollaries  can be applied. In the first part we treat Saper-type K\"ahler metrics while in last part we consider complete K\"ahler manifolds with finite volume and pinched negative sectional curvatures. 
\subsection{Saper metrics} These kind of metrics have been defined in \cite{GMM} and \cite{GMMI} in order to extend to the case of arbitrary analytic subvarieties of K\"ahler manifolds the construction carried out by Saper in \cite{LS} in the setting of isolated singularities.\\In the next definition we  recall from \cite{GMM} pag. 741 the definition of Saper-type metric.\\
\begin{definition}
Let $V$ be a singular subvariety of a compact complex manifold $M$ and let $\omega$ be the fundamental $(1,1)$-form of a Hermitian metric on $M$. Let $\pi:\wt{M}\rightarrow M$ be a holomorphic map of a compact complex manifold $\wt{M}$ to $M$ whose exceptional set $E$ is a divisor with normal crossing in $\wt{M}$ and such that the restriction $$\pi|_{\wt{M}\setminus E}:\wt{M}\setminus E \longrightarrow M\setminus \sing(V)$$ is a biholomorphism. Let $L_E$ be the line bundle on $\wt{M}$ associated to $E$ and let $h$ be a Hermitian metric on $L_E$. Let $s:\wt{M}\rightarrow L_E$ be a global holomorphic section whose  associated divisor $(s)$ equals $E$ (in particular $s$ vanishes exactly on $E$) and let $\|s\|_h$ be the norm of $s$ with respect to $h$.\\A metric on $\wt{M}\setminus E$ which is quasi-isometric to a metric with fundamental $(1,1)$-form $$l\pi^*\omega-\frac{\sqrt{-1}}{2\pi}\pa\overline{\pa}\log(\log\|s\|_h^2)^2$$ for $l$ a positive integer, will be called a \textbf{Saper-type metric}, distinguished with respect to the map $\pi$. The corresponding metric on $M\setminus {\sing{V}}\cong \wt{M}\setminus E$ and its restriction to $V\setminus \sing{V}$ are also called Saper-type metric.  
\end{definition}
We follow the convention used in \cite{GMM}: thus, when it is clear form the context, we omit the 
sentence "distinguished with respect to $\pi$".

Now we go on stating a fundamental existence result for Saper-type metrics proved by  Grant Melles and Milman  in  \cite{GMM}, Theorem 8.6 pag. 746. 

\begin{theorem}
\label{existencesaper}
Let $V$ be a singular subvariety of a compact K\"ahler manifold $M$ and let $\omega$ be the K\"ahler $(1,1)$-form of a K\"ahler metric on $M$. There exists a $C^{\infty}$ function $F$ on $M$, vanishing on $\sing(V)$, such that the $(1,1)$-form 
\begin{equation}
\label{saper}
\omega_S=\omega-\frac{\sqrt{-1}}{2\pi}\pa\overline{\pa}\log(\log F)^2
\end{equation} 
is the K\"ahler form of a complete Saper-type metric on $M\setminus \sing(V)$ and hence on $V\setminus \sing(V)$. Furthermore the function $F$ can be constructed to be of the form $$F=\prod_{\alpha}F^{\rho_{\alpha}}_{\alpha}$$ where $\{\rho_{\alpha}\}$ is a $C^{\infty}$ partition of unity subordinate to an open cover $\{U_{\alpha}\}$ of $M$, $F_{\alpha}$ is a function on $U_{\alpha}$ of the form $$F_{\alpha}=\sum_{j=1}^r|f_j|^2$$ and $f_1,...,f_r$ are holomorphic functions on $U_{\alpha}$, vanishing exactly on $U_{\alpha}\cap \sing(V)$. More specifically $f_1,...,f_r$ are local 
holomorphic generators of a coherent ideal sheaf $\cI$ on $M$ such that blowing up $M$ along $\cI$  desingularizes $V$, 
$\cI$ is supported on $\sing(V)$ and the exceptional divisor in the blow-up $\wt{M}$ along $\cI$  has normal crossing and 
has also normal crossing with the strict transform $\wt{V}$ of $V$ in $\wt{M}$ (the so called embedded desingularization of $X$.) 
\end{theorem}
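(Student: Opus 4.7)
The plan is to follow the construction of Grant Melles and Milman from \cite{GMM}. The proof rests on three pillars: (i) Hironaka's embedded desingularization to produce the ideal sheaf and its local generators, (ii) a partition-of-unity glueing to build the global function $F$, and (iii) a local calculation on the resolution to check that the claimed $(1,1)$-form is indeed a positive-definite, complete Saper-type metric once the parameter $l$ is sufficiently large.

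First I would invoke Hironaka's embedded desingularization theorem (see \cite{HH,BM}) applied to the pair $(M,V)$. This produces a coherent ideal sheaf $\cI$ on $M$, supported on $\sing(V)$, such that the blow-up $\pi:\wt M\to M$ along $\cI$ is smooth, its exceptional divisor $E$ has simple normal crossings, and $E$ meets the strict transform $\wt V$ transversely, i.e., with normal crossings. On a suitable finite open cover $\{U_\alpha\}$ of $M$ the sheaf $\cI$ is generated by holomorphic functions $f_1^{(\alpha)},\dots,f_{r_\alpha}^{(\alpha)}$ vanishing exactly on $U_\alpha\cap\sing(V)$. Set $F_\alpha:=\sum_{j=1}^{r_\alpha}|f_j^{(\alpha)}|^2$ and choose a smooth partition of unity $\{\rho_\alpha\}$ subordinate to $\{U_\alpha\}$. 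Then
\[
F:=\prod_\alpha F_\alpha^{\rho_\alpha}
\]
is smooth and nonnegative on $M$, positive on $\reg(V)\cup(M\setminus V)$, vanishing exactly to finite order on $\sing(V)$, and bounded above by a constant strictly less than $1$ after rescaling, so that $\log F$ is strictly negative and $(\log F)^2$ is well defined and large near $\sing(V)$.

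Next I would verify the Kähler property of $\omega_S$. Pull the computation back to $\wt M$ via $\pi$: in local coordinates adapted to $E$, the function $\pi^*F$ is, up to a smooth positive factor, a monomial $|z_1|^{2a_1}\cdots|z_k|^{2a_k}$ with $a_i\in\Q_{>0}$. A direct computation then shows
\[
-\frac{\sqrt{-1}}{2\pi}\pa\overline{\pa}\log(\log\pi^*F)^2
\;=\;\frac{2}{(-\log\pi^*F)^2}\,\sqrt{-1}\,\pa\log\pi^*F\wedge\overline{\pa}\log\pi^*F\;+\;\frac{2}{-\log\pi^*F}\,\Big(-\frac{\sqrt{-1}}{2\pi}\pa\overline{\pa}\log\pi^*F\Big),
\]
where the first term is manifestly semi-positive and the second, being a bounded multiple of $\pa\overline{\pa}\log\pi^*F$, can be dominated by $l\pi^*\omega$ once $l\in\Z_{>0}$ is chosen large enough (using that $\pi^*\omega$ is a smooth positive $(1,1)$-form on the compact manifold $\wt M$, and that the coefficients of the unwanted term decay like $1/(-\log\pi^*F)$). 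Hence the form \eqref{saper} descends, via the biholomorphism $\pi|_{\wt M\setminus E}$, to a genuine Kähler form on $M\setminus\sing(V)$, quasi-isometric to the model $l\pi^*\omega-\frac{\sqrt{-1}}{2\pi}\pa\overline{\pa}\log\|s\|_h^2$ of the definition, where $s$ cuts out $E$ with the natural metric $h$ on $L_E$.

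Finally, I would check completeness. Near any point of $E$ the radial integral of the length element along a curve approaching $E$ is controlled from below by an integral of the form $\int_0^{\epsilon}\frac{dr}{r\,|\log r|}=+\infty$, coming from the leading $\frac{2}{(-\log F)^2}|\pa\log F|^2$ contribution. This forces $\sing(V)$ to be at infinite distance and yields completeness of the resulting metric on $M\setminus\sing(V)$, hence on $V\setminus\sing(V)$. The main obstacle, and the only really nontrivial ingredient, is the uniform positivity estimate: one must show that the negative contribution from $\pa\overline{\pa}\log\pi^*F$ near the intersection loci of the components of $E$ can be absorbed into $l\pi^*\omega$ independently of the coordinate patch. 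This is where one exploits that $E$ has simple normal crossings and that only finitely many charts are needed because $\wt M$ is compact; the detailed verification is carried out in \cite{GMM}, Theorem 8.6.
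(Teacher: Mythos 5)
The paper does not actually prove this statement: it is imported verbatim as Theorem 8.6 of \cite{GMM} (Grant Melles--Milman), and the authors give no argument of their own. So there is no internal proof to compare yours against; what can be assessed is whether your sketch is a faithful and complete reconstruction of the cited argument. In outline it is: embedded desingularization produces $\cI$ and the local generators, the partition of unity glues the $F_\alpha$ into $F$, and the identity
\[
-\pa\overline{\pa}\log(\log F)^2 \;=\; \frac{2}{(\log F)^2}\,\pa\log F\wedge\overline{\pa}\log F\;+\;\frac{2}{-\log F}\,\pa\overline{\pa}\log F
\]
together with the logarithmic completeness integral $\int_0^{\eps} \frac{dr}{r|\log r|}=+\infty$ is exactly the right skeleton.

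Two caveats. First, the step you yourself flag as ``the only really nontrivial ingredient'' is in fact the entire content of the theorem, and your sketch does not supply it. The difficulty is not only the normal-crossings geometry of $E$: because $\log F=\sum_\alpha\rho_\alpha\log F_\alpha$, the Hessian $\pa\overline{\pa}\log F$ contains cross terms such as $\pa\rho_\alpha\wedge\overline{\pa}\log F_\alpha$ and $(\log F_\alpha)\,\pa\overline{\pa}\rho_\alpha$, whose coefficients blow up logarithmically near $\sing(V)$; showing that these are uniformly dominated after division by $-\log F$ requires knowing that the various $F_\alpha$ (and $\|s\|_h^2$ on the blow-up) are mutually comparable, which is the Chow-type comparison theorem occupying most of \cite{GMM}. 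Deferring this to the reference is legitimate for a quoted result, but it means your proposal is a road map rather than a proof. Second, a normalization mismatch: you obtain positivity of $l\pi^*\omega-\frac{\sqrt{-1}}{2\pi}\pa\overline{\pa}\log(\log F)^2$ for $l$ large, whereas the statement asserts it for the given $\omega$ itself (i.e.\ $l=1$). Since $\omega$ is fixed in advance, you cannot absorb the error into $l\omega$; the correct fix is to rescale $F\mapsto cF$ with $c>0$ small, which makes $-\log F$ uniformly large on all of $M$ and shrinks the troublesome term $\frac{2}{-\log F}\pa\overline{\pa}\log F$ below $\omega$ everywhere, while changing $\pa\overline{\pa}\log(\log F)^2$ only by terms that are controlled in the same way. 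Also note the $\frac{1}{2\pi}$ factors in your displayed decomposition are applied to one term and not the other.
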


Concerning Saper-type metric we have the following property.

\begin{proposition}
\label{Ohsawasaper}
In the setting of Th. \ref{existencesaper}. For each $p\in \sing(V)$ there exists an open neighborhood $U$ and a K\"ahler metric $g_U$ such that $g_S|_U$ is quasi-isometric to $g_U$ and $g_U$ satisfies the Ohsawa condition.
\end{proposition}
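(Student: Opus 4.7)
The plan is to construct, for each $p\in\sing(V)$, a small neighborhood $U$ together with a K\"ahler metric $g_U$ on $\reg(U)$ that is quasi-isometric to $g_S|_{\reg(U)}$ and whose K\"ahler form admits a single global potential whose $(1,0)$-differential is bounded in $g_U$. The construction is motivated by the observation that, in the representation $F=\prod_\beta F_\beta^{\rho_\beta}$ of Theorem~\ref{existencesaper}, each local factor $F_\beta$ is built from holomorphic generators of the same coherent ideal sheaf $\cI$ on $M$, so the $F_\beta$'s are pairwise quasi-isometric on overlaps by an invertible holomorphic change-of-generators matrix.

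Choose $U\Subset U_\alpha$ small enough that $\omega$ admits a K\"ahler potential $\phi$ on $U$, i.e.\ $\omega|_U=\tfrac{\sqrt{-1}}{2\pi}\pa\bar\pa\phi$. Define $g_U$ to be the K\"ahler metric whose $(1,1)$-form is
$$\omega_U\; :=\; \tfrac{\sqrt{-1}}{2\pi}\pa\bar\pa f,\qquad f\;:=\;l\phi-\log(\log F_\alpha)^2.$$
By construction $f$ is a candidate Ohsawa potential for $g_U$, so it remains to verify the quasi-isometry with $g_S|_{\reg(U)}$ and the $L^\infty$-bound for $\pa f$ in $g_U$.

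For the quasi-isometry, the relation $F_\beta\asymp F_\alpha$ on $U\cap U_\beta$ gives $\log F=\sum_\beta \rho_\beta\log F_\beta=\log F_\alpha+O(1)$, hence $(\log F)^2/(\log F_\alpha)^2\to 1$ as one approaches $\sing(V)$. Applying $\pa\bar\pa$, the additional terms produced by the derivatives of the $\rho_\beta$'s and by the $O(1)$ correction are of strictly lower order than the dominant Poincar\'e-type contribution $\pa\log F_\alpha\wedge\bar\pa\log F_\alpha/(\log F_\alpha)^2$ coming from $-\pa\bar\pa\log(\log F_\alpha)^2$, so $\omega_S|_U$ and $\omega_U$ are quasi-isometric. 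This careful bookkeeping of lower-order terms is the main technical step.

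To verify the Ohsawa bound, write $\pa f=l\pa\phi-2\,\pa\log F_\alpha/\log F_\alpha$. The first summand is smooth on $\ovl U$, hence bounded in any Hermitian metric, in particular in $g_U$ by Proposition~\ref{dual}. Expanding $-\tfrac{\sqrt{-1}}{2\pi}\pa\bar\pa\log(\log F_\alpha)^2$ shows that $\omega_U$ dominates the positive-definite summand proportional to $\sqrt{-1}\,\pa\log F_\alpha\wedge\bar\pa\log F_\alpha/(\log F_\alpha)^2$; by duality (Proposition~\ref{dual}) this forces $|\pa\log F_\alpha/\log F_\alpha|_{g_U}$ to be uniformly bounded on $\reg(U)$. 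This is precisely the mechanism that makes the Poincar\'e metric on the punctured disc satisfy the Ohsawa condition, and it yields $\pa f\in L^\infty\Omega^{1,0}(\reg(U),g_U)$ as required.
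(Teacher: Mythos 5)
The paper's own ``proof'' of this proposition is a one-line citation of \cite{GMM}, Prop.~8.10 and Prop.~9.11, so what you are really doing is trying to reprove the content of those propositions from scratch. Part of your argument is sound: the local model $\omega_U=\tfrac{\sqrt{-1}}{2\pi}\pa\overline{\pa}\bigl(l\phi-\log(\log F_\alpha)^2\bigr)$ is the right candidate, and your verification of the Ohsawa bound for it is correct. Indeed, writing $v=\log F_\alpha<0$ one has $-\pa\overline{\pa}\log(\log F_\alpha)^2=\tfrac{2}{-v}\,\pa\overline{\pa}v+\tfrac{2}{v^2}\,\pa v\wedge\overline{\pa}v$, both summands being nonnegative $(1,1)$-forms since $\log\sum_j|f_j|^2$ is plurisubharmonic; hence $\omega_U$ dominates a positive multiple of $\sqrt{-1}\,\pa v\wedge\overline{\pa}v/v^2$ and $|\pa v/v|_{g_U}$ is uniformly bounded, which together with the boundedness of $l\pa\phi$ gives $\pa f\in L^\infty\Omega^{1,0}(\reg(U),g_U)$. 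This is the Poincar\'e-metric mechanism and it is fine.

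The genuine gap is the quasi-isometry $g_S|_{\reg(U)}\asymp g_U$, which is exactly the nontrivial content of the cited GMM propositions and which you dispose of in one sentence (``the additional terms \dots are of strictly lower order''). Taken at face value, on $M$, this estimate fails. Although $F_\beta/F_\alpha$ is bounded above and below on overlaps, it is in general not even continuous across $\sing(V)$ (already $(|z|^2+2|w|^2)/(|z|^2+|w|^2)$ has no limit at the origin), and its first and second derivatives blow up like $r^{-1}$ and $r^{-2}$. Hence for $B=\sum_\beta\rho_\beta\log(F_\beta/F_\alpha)$, the difference $\log(\log F)^2-\log(\log F_\alpha)^2=2\log(1+B/\log F_\alpha)$ contributes a term of the rough size $\pa\overline{\pa}B/\log F_\alpha\sim r^{-2}|\log r|^{-1}$, which is \emph{larger} by a factor $|\log r|$ than the singular term $\pa v\wedge\overline{\pa}v/v^2\sim r^{-2}(\log r)^{-2}$ of $\omega_U$ that you want to dominate it with. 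So ``strictly lower order'' is not true by naive order counting; one needs the additional structure that Grant Melles--Milman extract by pulling everything back to the embedded resolution of $M$ along $\cI$, where the ideal becomes principal, each $F_\gamma$ becomes $|m|^2e^{\psi_\gamma}$ for a common monomial $m$ and smooth $\psi_\gamma$, and the differences $\log F_\beta-\log F_\alpha=\psi_\beta-\psi_\alpha$ are genuinely smooth, after which the two $(1,1)$-forms can be compared. Without this step (or an equivalent device) the quasi-isometry claim is unsupported, and it is the whole point of the proposition.
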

\begin{proof}
This follows from \cite{GMM} Prop. 8.10 and Prop. 9.11.
\end{proof}

We have now all the ingredients to apply Th. \ref{cohomology} to Saper-type K\"ahler metrics.
\begin{theorem}
\label{sapercohomology}
Let $M$ be a compact K\"ahler manifold with K\"ahler form $\omega$ and let $V$ be
an analytic subvariety  in $M$ of complex dimension $v$. Let  $\pi:\wtV\rightarrow V$ be a resolution of $V$. Finally let $g_S$ be  a Saper-type metric  on $\reg (V)$ as constructed in Th. \ref{existencesaper}. 
Then  the following isomorphism holds:
\begin{equation}
\label{mainiso}
H^{v,q}_{2,\overline{\pa}} (\reg (V), g_S)\cong H^{v,q}_{\overline{\pa}} (\widetilde{V})\,
\end{equation}
for every $q=v,...,n.$
\end{theorem}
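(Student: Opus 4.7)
The plan is to derive Theorem \ref{sapercohomology} as a direct application of the general result Theorem \ref{cohomology}. What needs to be checked are precisely the two hypotheses of that theorem for the Saper-type metric $g_S$: first, that $\pi_*\cC^{v,q}_{D,\sigma}$ is a fine sheaf for every $q$; second, that near each singular point $p \in \sing(V)$ there is a neighborhood $U$ and a $d$-bounded K\"ahler metric $g_U$ on $\reg(U)$ that is quasi-isometric to $g_S|_{\reg(U)}$.

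The second hypothesis is almost for free. Proposition \ref{Ohsawasaper} produces, near every $p \in \sing(V)$, a K\"ahler metric $g_U$ on $\reg(U)$ that satisfies the Ohsawa condition and is quasi-isometric to $g_S|_U$. By the remark following Definition \ref{ohsawa}, an Ohsawa metric is automatically $d$-bounded. So this step requires no new work.

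The first hypothesis is the substantive point, and my plan is to verify it via the criterion of Lemma \ref{finecriterion}. Given an arbitrary open cover $\cU = \{U_i\}$ of $V$, I would extend it to an open cover of a neighborhood of $V$ in the ambient compact K\"ahler manifold $M$, choose a smooth partition of unity $\{\lambda_j\}$ on $M$ subordinate to this extended cover, and restrict the $\lambda_j$ to $V$. The restrictions are continuous on $V$, smooth on $\reg(V)$, and form a partition of unity subordinate to $\cU$. The task then reduces to bounding $|d(\lambda_j|_{\reg(V)})|_{g_S^*}$. Here the relevant structural fact about Saper metrics is that, up to quasi-isometry, the K\"ahler form $\omega_S$ is of the form $l\pi^*\omega - \tfrac{\sqrt{-1}}{2\pi}\pa\db\log(\log F)^2$, with the $\log(\log F)^2$ contribution giving a nonnegative $(1,1)$-form. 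Hence $g_S \gtrsim l\pi^*\omega|_{\reg(V)}$ pointwise, and Proposition \ref{dual} yields the reverse inequality $g_S^* \lesssim (l\pi^*\omega)^*|_{\reg(V)}$ on $T^*\reg(V) \otimes \C$. Since each $\lambda_j$ is smooth on the compact manifold $M$, its differential is uniformly bounded in the K\"ahler metric $\omega$, and the dual inequality transfers this boundedness to $g_S$.

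The main potential obstacle is making the pointwise inequality $g_S \gtrsim l\pi^*\omega$ genuinely rigorous, i.e.\ verifying that the Poincar\'e-type contribution coming from $-\tfrac{\sqrt{-1}}{2\pi}\pa\db \log(\log F)^2$ is indeed a nonnegative $(1,1)$-form on $\reg(V)$ (this is the whole reason the term is added in the Saper construction, so it should be extractable from \cite{GMM}). Once this is in hand, both hypotheses of Theorem \ref{cohomology} are established for $(\reg(V), g_S)$, and the claimed isomorphism
\[
H^{v,q}_{2,\overline{\pa}}(\reg(V), g_S) \cong H^{v,q}_{\overline{\pa}}(\widetilde{V})
\]
follows immediately from that theorem applied to the resolution $\pi\colon \widetilde{V} \to V$.
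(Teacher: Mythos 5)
Your overall strategy coincides with the paper's: both proofs reduce Theorem \ref{sapercohomology} to Theorem \ref{cohomology}, dispose of the second hypothesis by quoting Proposition \ref{Ohsawasaper} together with the remark that Ohsawa metrics are $d$-bounded, and verify the first hypothesis with a partition of unity obtained by restricting to $V$ a smooth partition of unity on the ambient compact manifold. The only divergence is in how the fineness of $\pi_*\cC^{v,q}_{D,\sigma_S}$ is certified. The paper does not go through Lemma \ref{finecriterion} and a pointwise gradient bound; it instead quotes \cite[Proposition 10.2.1]{GMMI} (Proposition \ref{fine}), which asserts directly that $d(f|_{\reg(U)})\wedge\eta\in L^2$ whenever $f$ is smooth on the ambient manifold and $\eta$ is $L^2$ for $g_S$ --- exactly the property needed to see that multiplication by $f$ preserves the maximal domain of $\db_{v,q}$. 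Your route, via $\|d(\lambda_j|_{\reg(V)})\|_{L^\infty\Omega^1(\reg(V),g_S)}<\infty$, is a strictly stronger pointwise statement that would also suffice.

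The step you flag as the ``main potential obstacle'' is indeed where all the content sits, and your proposed justification for it is not quite right as stated. The term $-\tfrac{\sqrt{-1}}{2\pi}\pa\db\log(\log F)^2$ is \emph{not} in general a nonnegative $(1,1)$-form: $F=\prod_\alpha F_\alpha^{\rho_\alpha}$ involves the cut-off functions $\rho_\alpha$, and already for $\log(\log\|s\|_h^2)^2$ the curvature of the Hermitian metric on $L_E$ contributes terms of indefinite sign. This is precisely why Grant Melles--Milman must take the integer $l$ large: their positivity argument shows that the negative contributions of the correction term are dominated by a fixed multiple of $\pi^*\omega$, so that for $l$ large one does get $\omega_S\geq c\,\pi^*\omega$ (at least locally near $\sing(V)$, which is all you need, since away from $\sing(V)$ everything is smooth and compactly supported). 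So your inequality $g_S\gtrsim l\pi^*\omega$ is recoverable from \cite{GMM}, but not by the ``nonnegativity of the Poincar\'e-type term'' heuristic; the cleanest fix is to cite the $L^2$-estimate of \cite[Proposition 10.2.1]{GMMI} as the paper does, which packages exactly this domination in the form needed for fineness. With that substitution your argument matches the published proof.
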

\begin{proof}
Let $D\subset \widetilde{V}$ be the divisor with only normal crossing such that $\pi^{-1}(\sing(V))=D$. Let $\sigma_S:=(\pi|_{\widetilde{V}\setminus D})^*g_S$. Thanks to Prop. \ref{Ohsawasaper}, in order to deduce the above theorem by Th. \ref{cohomology}, we have only to check that the sheaves $\{\pi_*\cC_{D,\sigma_S}^{v,q},q\geq 0\}$  are fine. This is done as follows. In order to prove that $\pi_*\cC^{v,q}_{D,\sigma_S}$ is fine it is enough to show that given a cover $\mathcal{U}=\{U_i\}_{i\in I}$ of $V$ there exists a continuous partition of unity $\{f_{\gamma}\}_{\gamma\in G}$ subordinated to $\mathcal{U}$ such that for each $\gamma\in G$
\begin{itemize}
\item $f_{\gamma}|_{\reg(V)}$ is smooth
\item If $A$ is an open subset of $V$ and $\omega\in \pi_*\cC^{v,q}_{D,g_S}(A)$ then $f_{\gamma} \omega \in \pi_*\cC^{v,q}_{D,g_S}(A)$.
\end{itemize}
To this end we recall    \cite[Proposition 10.2.1]{GMMI}:

\begin{proposition}
\label{fine}
Let $p\in \sing(V)$, let $U$ be an open neighborhood of $p$ in $V$,  let $f$ be a smooth function on $M$ and let $\omega\in L^2\Omega^k(\reg(U), g_S|_{\reg(U)})$. Then $d(f|_{\reg(U)})\wedge \omega\in  L^2\Omega^{k+1}(\reg(U), g_S|_{\reg(U)})$.
\end{proposition}

\noindent Clearly $d(f|_{\reg(U)})\wedge \omega$ $=\pa(f|_{\reg(U)})\wedge \omega+\overline{\pa}(f|_{\reg(U)})\wedge \omega$. Therefore, thanks to Prop. \ref{fine}, if  $\omega\in L^2\Omega^{p,q}(\reg(U),g_S|_{\reg(U)})$, $p+q=k$, we can conclude that $\overline{\pa}(f|_{\reg(U)})\wedge \omega\in L^2\Omega^{p,q+1}(\reg(U), g_S|_{\reg(U)})$. In particular if $\omega\in \pi_*\cC^{v,q}_{D,g_S}(U)$ then we can conclude that $f|_{\reg(U)}\omega \in \pi_*\cC^{v,q}_{D,g_S}(U)$.
Let now $\mathcal{V}=\{V_j\}_{j\in J}$ be an open cover of $M$ such that the induced cover on $V$ is equal to $\mathcal{U}$.
Considering a smooth partition of unity subordinated to $\mathcal{V}$ and restricting it to $V$ and using the remark that we have just made,
it is clear that we have built a partition of unity on $V$ subordinated to $\mathcal{U}$ with the required properties.
\end{proof}

\begin{remark}
In the particular case of isolated singularities the isomorphism above was established by Saper as  a corollary of 
his main result in \cite{LS}, namely the isomorphism of $H^\ell_{2} (\reg (V),g_S)\cong I^{\underline{m}}H^\ell (V,\RR)$, and the 
identification of the induced Hodge structure on $I^{\underline{m}}H^\ell (V,\RR)$ with the one constructed by Saito. Our proof, on the other hand,  is direct and rests solely on analytic arguments.
\end{remark}

We conclude this subsection with the following corollaries. 
\begin{corollary}
In the setting of Th. \ref{sapercohomology}. Then Cor. \ref{fred1}--Cor. \ref{fred4} hold for $(\reg(V),g_S)$.
\end{corollary}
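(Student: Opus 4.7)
The plan is to reduce this corollary to a direct application of the arguments already used in the proofs of Cor.~\ref{fred1}--Cor.~\ref{fred4}, once we observe that the Saper-type metric $g_S$ satisfies all the structural hypotheses those arguments require. Specifically, I would first recall that by Theorem \ref{existencesaper} the Saper-type metric $g_S$ is both complete and K\"ahler on $\reg(V)$, and that by Theorem \ref{sapercohomology} we have the isomorphism $H^{v,q}_{2,\overline{\pa}}(\reg(V),g_S)\cong H^{v,q}_{\overline{\pa}}(\widetilde{V})$ for every $q=0,\ldots,v$. Since $\widetilde{V}$ is compact, the right-hand side is finite-dimensional, so each $H^{v,q}_{2,\overline{\pa}}(\reg(V),g_S)$ is finite-dimensional.

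Then I would treat the four corollaries in turn. For the analogue of Cor.~\ref{fred1}, the finite-dimensionality just established, together with Theorem 2.4 of \cite{BLHC}, yields that the unique closed extension $\overline{\pa}_{v}+\overline{\pa}^*_{v}$ acting on $L^2\Omega^{v,\bullet}(\reg(V),g_S)$ is Fredholm on its domain with the graph norm. For the analogue of Cor.~\ref{fred2}, I would invoke the $L^2$ Serre duality of \cite{JRu} (applicable because $g_S$ is complete and Hermitian) to obtain $H^{0,q}_{2,\overline{\pa}}(\reg(V),g_S)\cong H^{v,v-q}_{2,\overline{\pa}}(\reg(V),g_S)\cong H^{v,v-q}_{\overline{\pa}}(\widetilde{V})\cong H^{0,q}_{\overline{\pa}}(\widetilde{V})$, and the Euler characteristic identity follows by taking the alternating sum. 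The analogue of Cor.~\ref{fred3} is then immediate, again by the finite-dimensionality and Theorem 2.4 of \cite{BLHC}.

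For the analogue of Cor.~\ref{fred4}, the K\"ahler property of $g_S$ is essential: it ensures that on smooth compactly supported forms we have $\Delta_{v-q,0,\pa}=\Delta_{v-q,0,\overline{\pa}}$, so the Hodge star unitary equivalence between $\Delta_{v,q,\overline{\pa}}$ and $\Delta_{v-q,0,\pa}$ can be propagated to a Fredholm statement for $\Delta_{v-q,0,\overline{\pa}}$. Since $g_S$ is both K\"ahler and complete, the very same chain of identifications used in the proof of Cor.~\ref{fred4} --- Hodge star unitary equivalence, K\"ahler identity of Laplacians on $(v-q,0)$-forms, self-adjointness of the completion, and the closed-range argument of \cite{FB} Cor.~4.1 --- carries over verbatim with $h$ replaced by $g_S$, giving both the finite dimensionality of $H^{q,0}_{2,\overline{\pa}}(\reg(V),g_S)$ and the closed range of $\overline{\pa}_{q,0}$.

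There is no substantial obstacle: the whole point is that Theorem \ref{sapercohomology} was precisely designed to verify the hypotheses of Theorem \ref{cohomology} for $g_S$, after which the corollaries are purely formal consequences. The only mild subtlety to spell out is that Cor.~\ref{fred4} genuinely requires the K\"ahler hypothesis, which is available here because the Grant Melles--Milman construction produces a K\"ahler form, and that the $L^2$ Serre duality step requires completeness, which is part of the definition of a Saper-type metric as constructed in Theorem \ref{existencesaper}.
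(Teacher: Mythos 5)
Your proposal is correct and matches the paper's (implicit) argument: the paper leaves this corollary without proof precisely because Theorem \ref{sapercohomology} verifies the hypotheses of Theorem \ref{cohomology} for $(\reg(V),g_S)$, after which Cor.~\ref{fred1}--Cor.~\ref{fred4} apply verbatim, exactly as you explain. Your only deviation is cosmetic: you phrase $L^2$ Serre duality as $H^{0,q}_{2,\overline{\pa}}\cong H^{v,v-q}_{2,\overline{\pa}}$ and close the loop via Serre duality on the compact resolution, whereas the paper's proof of Cor.~\ref{fred2} states it directly as $H^{v,q}_{2,\overline{\pa}}\cong H^{0,q}_{2,\overline{\pa}}$; both yield the same conclusion.
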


\begin{corollary}
Let $(M,h)$ and $(N,g)$ be compact K\"ahler manifolds and let $V\subset M$, $W\subset N$ be analytic subvarieties of complex dimension $v$. Assume that $V$ and $W$ are bimeromorphic. Then for each $q=0,...,v$ 
\begin{equation*}
 H^{v,q}_{2,\overline{\pa}}(\reg(V),h_S)\cong H^{v,q}_{2,\overline{\pa}}(\reg(W),g_S)\\
\quad\quad\quad H^{0,q}_{2,\overline{\pa}}(\reg(V),h_S)\cong H^{0,q}_{2,\overline{\pa}}(\reg(W),g_S)
\end{equation*}
 where $h_S$ and $g_S$ are Saper-type metrics as constructed in Th. \ref{existencesaper} on $V$ and $W$ respectively.
\end{corollary}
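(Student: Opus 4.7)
The plan is to realise this corollary as a direct specialisation of Cor.~\ref{bir}: that corollary gives exactly the same conclusion for any two bimeromorphic compact irreducible complex spaces whose Hermitian regular parts satisfy the hypotheses of Th.~\ref{cohomology}. Consequently, the entire task reduces to verifying that $(\reg V,h_S)$ and $(\reg W,g_S)$ do satisfy those hypotheses.

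First I would fix resolutions $\pi:\widetilde V\rightarrow V$ and $p:\widetilde W\rightarrow W$ with normal crossings exceptional divisors (provided by Hironaka). The first hypothesis of Th.~\ref{cohomology}, fineness of $\pi_{*}\cC^{v,q}_{D,\sigma_S}$, was already established in the proof of Th.~\ref{sapercohomology}: one uses Prop.~\ref{fine} to check that multiplication by (the restriction to $\reg V$ of) a smooth function on $M$ preserves each sheaf, and then lifts a smooth partition of unity subordinated to an open cover of $M$ to one subordinated to the induced cover of $V$. The same verification applies verbatim to $(W,g_S)$. The second hypothesis, the existence near each singular point of a $d$-bounded K\"ahler metric quasi-isometric to the given Hermitian metric, is precisely Prop.~\ref{Ohsawasaper}, which in fact furnishes a local K\"ahler model satisfying the stronger Ohsawa condition.

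With both hypotheses checked for both spaces, Cor.~\ref{bir} applied to the pair $(V,h_S)$, $(W,g_S)$ delivers both displayed isomorphisms simultaneously. I do not anticipate a substantive obstacle: the real analytic work was already done in proving Th.~\ref{sapercohomology}, so the corollary is essentially its bimeromorphic incarnation. An equivalent but more explicit route would be to first rewrite each of the four $L^2$ groups as $H^{v,q}_{\overline{\pa}}$ (respectively $H^{0,q}_{\overline{\pa}}$) of the corresponding resolution via Th.~\ref{sapercohomology} and Cor.~\ref{fred2}, and then invoke the classical bimeromorphic invariance of $H^q$ of the structure sheaf and of the canonical sheaf of a compact complex manifold to identify $H^{v,q}_{\overline{\pa}}(\widetilde V)$ with $H^{v,q}_{\overline{\pa}}(\widetilde W)$ and similarly for $H^{0,q}_{\overline{\pa}}$.
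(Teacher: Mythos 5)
Your proposal matches the paper's argument: the paper proves this corollary by simply invoking Cor.~\ref{bir}, the hypotheses of Th.~\ref{cohomology} for $(\reg V,h_S)$ and $(\reg W,g_S)$ having already been verified in the proof of Th.~\ref{sapercohomology} via Prop.~\ref{Ohsawasaper} and the fineness argument. Your more explicit alternative route is also exactly what the paper's proof of Cor.~\ref{bir} itself does, so there is no substantive difference.
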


\begin{proof}
This follows from Cor. \ref{bir}.
\end{proof}

\subsection{Further remarks on Saper metrics}
In this subsection we collect some byproducts of Th. \ref{cohomology} in the framework of Saper-type metrics. Consider again the setting of Theorem \ref{sapercohomology}.  To avoid any  confusion with the notations let us now label by $\omega'_S$ the $(1,1)$-form on $M\setminus \sing(V)$ given by $$\omega'_S=\omega-\frac{\sqrt{-1}}{2\pi}\pa\overline{\pa}\log(\log F)^2$$ where $\omega$ is the fundamental form of a K\"ahler metric on $M$ and $F$ is defined in \eqref{saper}. Let $g'_S$ be the Saper-type metric on $M\setminus \sing(V)$ whose fundamental form is $\omega'_S$. If we label by $i:\reg(V)\rightarrow M$ the inclusion of $\reg(V)$ in $M$, then we have  $g_S=i^*(g'_S)$ where $g_S$ is the Saper-type metric considered in Theorem \ref{sapercohomology}.  We have the following
\begin{theorem}
In the setting described above. We have the following isomorphisms:
\begin{equation}
\label{byproduct}
H^{m,q}_{2,\overline{\pa}}(M\setminus \sing(V),g'_S)\cong H^{m,q}_{\overline{\pa}}(M)
\end{equation}
\begin{equation}
\label{byproduct2}
H^{0,q}_{2,\overline{\pa}}(M\setminus \sing(V),g'_S)\cong H^{0,q}_{\overline{\pa}}(M)
\end{equation}
where $m$ is the complex dimension of $M$.
\end{theorem}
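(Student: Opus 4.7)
The plan is to follow verbatim the proof scheme of Theorem \ref{cohomology}, viewing $M$ as playing the role of the ambient compact irreducible complex space and the closed analytic subset $\sing(V)\subset M$ as playing the role of its singular locus. Concretely, I would take the resolution $\pi:\widetilde M\to M$ supplied by Theorem \ref{existencesaper}, so that $E:=\pi^{-1}(\sing(V))$ is a normal crossings divisor in $\widetilde M$ and $\pi|_{\widetilde M\setminus E}$ is a biholomorphism onto $M\setminus\sing(V)$, and set $\sigma'_S:=(\pi|_{\widetilde M\setminus E})^*g'_S$, a complete Hermitian metric on $\widetilde M\setminus E$. Since $M$ and $\widetilde M$ are smooth and $\pi$ is bimeromorphic, we have $\pi_*\mathcal K_{\widetilde M}=\mathcal K_M$ and hence $H^q(M,\mathcal K_M)=H^{m,q}_{\overline{\partial}}(M)$.

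To prove \eqref{byproduct}, I would show that the complex of sheaves $\{\pi_*\mathcal C^{m,q}_{E,\sigma'_S},\ q\geq 0\}$ is a fine resolution of $\mathcal K_M$. Fineness is obtained exactly as in the proof of Theorem \ref{sapercohomology}: the key ingredient, Proposition \ref{fine}, is a statement about wedging with $df$ for $f\in C^\infty(M)$ and so it applies verbatim to any open $U\subset M$. Exactness at positive degrees is the local assertion that near each $p\in \sing(V)$ every $\overline{\partial}$-closed $L^2$ class in bidegree $(m,q)$ is locally $\overline{\partial}$-exact; one simply repeats the argument of Theorem \ref{cohomology}: Proposition \ref{Ohsawasaper} (which is really a statement about local models on the ambient $M$) yields an open neighborhood $U\subset M$ of $p$ together with a $d$-bounded K\"ahler metric $g_U$ on $U\setminus(U\cap\sing(V))$ quasi-isometric to $g'_S$ there; Propositions \ref{pullback} and \ref{sum} then allow one to form a complete $d$-bounded K\"ahler metric $\gamma_U=\rho_U+g_U$ on $U\setminus(U\cap \sing(V))$, and Gromov's vanishing (Theorem \ref{L2vanishing}) together with Proposition \ref{hilbertcomparison} closes the local exactness exactly as in the proof of Theorem \ref{cohomology}. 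Finally, the identification of $\ker(\pi_*\mathcal C^{m,0}_{E,\sigma'_S}\to \pi_*\mathcal C^{m,1}_{E,\sigma'_S})$ with $\mathcal K_M=\pi_*\mathcal K_{\widetilde M}$ proceeds by the $L^2$-extension argument given in the last paragraph of the proof of Theorem \ref{cohomology}. Combined with Takegoshi vanishing this yields \eqref{byproduct}.

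For \eqref{byproduct2} I would mimic Corollary \ref{fred2}: combining \eqref{byproduct} (with $q$ replaced by $m-q$) with the $L^2$-Serre duality of \cite{JRu} and classical Serre duality on the smooth compact K\"ahler manifold $M$, one obtains
\[
H^{0,q}_{2,\overline{\partial}}(M\setminus\sing(V),g'_S)\cong H^{m,m-q}_{2,\overline{\partial}}(M\setminus\sing(V),g'_S)\cong H^{m,m-q}_{\overline{\partial}}(M)\cong H^{0,q}_{\overline{\partial}}(M),
\]
which is the second desired isomorphism.

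The main obstacle is not analytic but of a bookkeeping nature: Theorem \ref{cohomology} is stated for a compact irreducible complex space $V$ whose singular locus is $\sing(V)$, whereas we wish to apply it with $V$ replaced by the smooth manifold $M$ and $\sing(V)\subset M$ merely playing the role of a distinguished closed subset. One has to check that every step of the original argument is intrinsic to the data $(M,\sing(V),\pi,g'_S)$ and does not use that the ambient space has any actual singularities. The three inputs needed (a good resolution from Theorem \ref{existencesaper}, a local $d$-bounded K\"ahler model quasi-isometric to $g'_S$ from Proposition \ref{Ohsawasaper}, and the completeness of $g'_S$) are all available here, so the transfer poses no genuine difficulty.
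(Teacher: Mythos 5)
Your proposal is correct and follows essentially the same route as the paper: local exactness via the $d$-bounded model $\gamma_U=\rho_U+g_U$ built from Proposition \ref{Ohsawasaper} and Gromov's vanishing, fineness via Proposition \ref{fine}, identification of the kernel with $\mathcal{K}_M$ by the $L^2$-extension argument, and $L^2$-Serre duality for \eqref{byproduct2}. The only (cosmetic) difference is that the paper defines the sheaves $\cC^{m,q}_{V_s,g'_S}$ directly on the smooth manifold $M$ --- so no resolution $\widetilde M$, no pushforward, and no appeal to Takegoshi vanishing is needed, since $H^q(M,\mathcal K_M)=H^{m,q}_{\overline\partial}(M)$ outright --- whereas you route through $\widetilde M$; the resulting sheaves on $M$ are canonically the same, so this changes nothing of substance.
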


\begin{proof}
The proof is similar to the  one given for Th. \ref{sapercohomology} and for the sake of brevity we omit the details.
Let us label by $V_s$  the singular locus of $V$. Consider  on $M$ the preasheaf $C^{p,q}_{V_s,g'_S}$ defined by $C^{p,q}_{V_s,g'_S}(U):=\{\mathcal{D}(\overline{\pa}_{p,q,\max})\ \text{on}\ (U\setminus U\cap V_s, g'_S|_{U\setminus U\cap V_s})\}$ where $U$ is any open subset of $M$.
The sheafification of $C^{p,q}_{V_s,g'_S}$ is denoted by  $\cC^{p,q}_{V_s,g'_S}$. Analogously to the proof of Theorem \ref{cohomology} we want to show that the complex $\{\cC^{m,q}_{V_s,g'_S}, q\geq 0\}$, whose  morphisms are induced by the distributional action of $\overline{\pa}_{m,q}$,  is a fine resolution of $\cK_M$. In particular that $\cC^{p,q}_{V_s,g'_S}$ is fine for each $p,q$ follows using a partition of unity of $M$. 
Now let $p\in V_s$ and let $W$ be a sufficiently small neighborhood of $p$. We can assume that there exists a positive constant $c$, an  and a biholomorphism  $\phi: W \longrightarrow B(0,c)$ where $B(0,c)$ is the ball in $\bbC^m$ centered in $0$  with radius $c$. Let $\psi:B(0,c)\rightarrow \mathbb{R}$ be defined as $\psi:=-(\log(c^2-|z|^2))$, let $g$ be the K\"ahler metric on $B(0,c)$ whose K\"ahler form is given by $\sqrt{-1}\partial \overline{\partial}\psi$ and let  $\rho_{W}:=(\phi|_{\reg(W\setminus (W\cap V_s))})^*g$. By \cite{GMM} Prop. 8.10 and 9.11 we know that $g_{S'}|_{W\setminus (W\cap V_s)}$ is quasi isometric to a K\"ahler metric $g_W$ which satisfies the Ohsawa condition. Now we introduce the  following K\"ahler metric on $W\setminus (W\cap V_s)$:
\begin{equation}
\label{wgamma}
\gamma_W:=\rho_{W}+g_{W}
\end{equation}
Using $\gamma_{W}$ and arguing as in the proof of  Th. \ref{cohomology}  we can conclude that 
$\{\cC^{m,q}_{V_s,g'_S},\ q\geq 0\}$ is an exact sequence of sheaves. Finally we are left to show  that the kernel of the sheaves morphism $\cC^{m,0}_{V_s,g'_S}\stackrel{\overline{\pa}_{m,0}}{\rightarrow}\cC^{m,1}_{V_s,g'_S}$ is $\cK_{M}$. This can be seen again as in the proof of Th. \ref{cohomology}. In conclusion $\{\cC^{m,q}_{V_s,g'_S}, q\geq 0\}$ is a fine resolution of $\cK_{M}$ and thus we can conclude that $H^{m,q}_{2,\overline{\pa}}(M\setminus \sing(V),g'_S)\cong H^{m,q}_{\overline{\pa}}(M)$. Applying  $L^2$-Serre duality we finally get $H^{0,q}_{2,\overline{\pa}}(M\setminus \sing(V),g'_S)\cong H^{0,q}_{\overline{\pa}}(M)$ and this completes the proof. 
\end{proof}

\medskip

Next we consider a Hermitian holomorphic line bundle $L$ on the resolution $\wtV$. We can restrict $L$ to $\wtV\setminus D$
and push it forward to $\pi_* L$ on $V\setminus \sing(V)$ through the biholomorphism $\pi |_{\wtV\setminus D}: \wtV\setminus D\to V\setminus \sing(V)$
(where $\pi: \wtV\to V$ is the map appearing in the resolution of $V$):
$$\pi_* L : = (\pi |_{\wtV\setminus D})^{-1})^* L |_{\wtV\setminus D}\,.$$
We shall say that $L$ is semipositive with respect to the base $V$ if for each $p\in V$ there exists a neighbourhood $U_p$ in $V$ such that 
$L$ is positive on $\pi^{-1} (U_p)$.

\begin{theorem}\label{theo:with-line-bundle}
Consider the setting of Theorem \ref{sapercohomology}. Let  $L\to \wtV$ be  a Hermitian holomorphic line bundle which is semipositive with respect to $V$.  Then there exist isomorphisms:
\begin{equation}\label{with-line-bundle}
H^{v,q}_{2,\overline{\pa}}(V\setminus \sing(V),\pi_* L, g_S)\cong H^{v,q}_{\overline{\pa}}(\wtV,L)\quad\quad
H^{0,q}_{2,\overline{\pa}}(V\setminus \sing(V),\pi_* L^*, g_S)\cong H^{0,q}_{\overline{\pa}}(\wtV,L^*)\,.
\end{equation}
\end{theorem}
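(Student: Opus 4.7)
\emph{Proof plan.} The strategy is to re-run the sheaf-theoretic argument of the proofs of Theorems~\ref{cohomology} and~\ref{sapercohomology} with $(v,q)$-forms replaced by $(v,q)$-forms with values in $L$ (on $\widetilde V$) and in $\pi_*L$ (on $\reg(V)$), replacing the application of Theorem~\ref{L2vanishing} by a local $L^2$ vanishing that exploits the positivity of $L$, and replacing the Takegoshi isomorphism $H^q(\widetilde V,\mathcal{K}_{\widetilde V})\cong H^q(V,\pi_*\mathcal{K}_{\widetilde V})$ by its twisted version for $\mathcal{K}_{\widetilde V}\otimes L$.

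More precisely, fix a Hermitian metric on $L$ and, for each open $U\subset \widetilde V$, define the presheaf $C^{v,q}_{D,\sigma_S,L}(U)$ whose sections are elements of the maximal domain of $\overline{\pa}_{v,q,\max}$ acting on $L$-valued forms on $(U\setminus(U\cap D),\sigma_S|_{U\setminus(U\cap D)})$; write $\mathcal{C}^{v,q}_{D,\sigma_S,L}$ for its sheafification. Fineness of $\pi_*\mathcal{C}^{v,q}_{D,\sigma_S,L}$ on $V$ follows from Proposition~\ref{fine} applied pointwise in $L$: multiplication by a smooth function pulled back from $M$ commutes with the $L$-twist, so the partition of unity used in the proof of Theorem~\ref{sapercohomology} still produces the required splitting. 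Identification of the kernel of $\pi_*\mathcal{C}^{v,0}_{D,\sigma_S,L}\to\pi_*\mathcal{C}^{v,1}_{D,\sigma_S,L}$ with $\pi_*(\mathcal{K}_{\widetilde V}\otimes L)$ is identical to the corresponding step of Theorem~\ref{cohomology}: elliptic regularity together with the holomorphic $L^2$-extension theorem of \cite{JRu} (used with twisted coefficients) extend maximal-domain holomorphic $L$-valued $(v,0)$-forms across $D$.

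The heart of the proof is the local $L^2$ vanishing in positive degrees. Given $p\in\sing(V)$, the hypothesis that $L$ is semipositive with respect to $V$ provides a neighborhood $U_p$ of $p$ in $V$ on which the pullback of $\pi_*L$ to $\pi^{-1}(U_p)$ is positive. Shrinking if necessary and invoking Proposition~\ref{Ohsawasaper}, $g_S$ is quasi-isometric on $\reg(U_p)$ to a K\"ahler metric $g_{U_p}$ satisfying the Ohsawa condition. Build the complete $d$-bounded K\"ahler metric $\gamma_{U_p}:=\rho_{U_p}+g_{U_p}$ exactly as in the proof of Theorem~\ref{cohomology}. The claim I need is $H^{v,q}_{2,\overline{\pa}}(\reg(U_p),\pi_*L,\gamma_{U_p})=0$ for $q>0$; this is the twisted analogue of Theorem~\ref{L2vanishing} and follows from a H\"ormander/Demailly $L^2$ estimate on the complete K\"ahler manifold $(\reg(U_p),\gamma_{U_p})$ with values in the positive line bundle $\pi_*L$, or equivalently by adapting Gromov's argument using a bounded primitive of $\omega_{\gamma_{U_p}}$ combined with the positive curvature form of $L$ in the Bochner--Kodaira--Nakano identity. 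Combining this vanishing with Proposition~\ref{hilbertcomparison} and the quasi-isometry, as at the end of the proof of Theorem~\ref{cohomology}, yields local primitives with respect to $g_S$ on a smaller neighborhood $W\subset U_p$ and hence exactness of $\{\pi_*\mathcal{C}^{v,q}_{D,\sigma_S,L},\,q\ge 0\}$ at the stalk level.

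Passing to global sections then gives $H^{v,q}_{2,\overline{\pa}}(\reg(V),\pi_*L,g_S)\cong H^q(V,\pi_*(\mathcal{K}_{\widetilde V}\otimes L))$. Since $L$ is semipositive with respect to $V$, Takegoshi's relative vanishing theorem \cite{TaKe} applied to $\mathcal{K}_{\widetilde V}\otimes L$ gives $R^q\pi_*(\mathcal{K}_{\widetilde V}\otimes L)=0$ for $q>0$, and the Leray spectral sequence collapses to $H^q(V,\pi_*(\mathcal{K}_{\widetilde V}\otimes L))\cong H^q(\widetilde V,\mathcal{K}_{\widetilde V}\otimes L)\cong H^{v,q}_{\overline{\pa}}(\widetilde V,L)$, which is the first isomorphism of \eqref{with-line-bundle}. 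The second follows from $L^2$-Serre duality on $(\reg(V),g_S)$ with coefficients in $\pi_*L$, exactly as in Corollary~\ref{fred2}. The main obstacle I expect is the twisted local vanishing: one must verify that the $d$-bounded, Ohsawa-type K\"ahler metric $\gamma_{U_p}$ interacts correctly with the local positivity of $L$ in the Bochner--Kodaira identity to produce a strictly positive lower bound on the twisted Laplacian on $(v,q)$-forms ($q>0$), and that the resulting $L^2$ primitives lie in the \emph{maximal} domain after transferring back to $g_S$ through the quasi-isometry.
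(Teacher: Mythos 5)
Your proposal follows essentially the same route as the paper: twist the sheaf complex by $L$, check fineness and the identification of the kernel sheaf with $\pi_*(\mathcal{K}_{\widetilde V}\otimes L)$ as in Theorems \ref{cohomology} and \ref{sapercohomology}, prove local exactness via a twisted $L^2$ vanishing, identify $H^q(V,\pi_*(\mathcal{K}_{\widetilde V}\otimes L))$ with $H^q(\widetilde V,\mathcal{K}_{\widetilde V}\otimes L)$, and obtain the second isomorphism from $L^2$-Serre duality. The one step you flag as the main obstacle --- the twisted local vanishing $H^{v,q}_{2,\overline{\pa}}(\reg(U_p),\pi_*L,\gamma_{U_p})=0$ for $q>0$ --- is exactly where the paper invokes a ready-made result, Theorem 3.2 of \cite{JRu}, which extends Theorem \ref{L2vanishing} to $(v,q)$-forms with values in a semipositive Hermitian line bundle for K\"ahler metrics satisfying the Ohsawa condition. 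Of the two routes you sketch for proving this yourself, note that the plain H\"ormander/Demailly estimate would require $i\Theta(L)\geq \varepsilon\,\omega_{\gamma_{U_p}}$ uniformly, which fails here: the curvature of $L$ is bounded with respect to a smooth metric on the compact $\widetilde V$, while $\gamma_{U_p}$ is complete and blows up near $D$. The workable route is your second one, a Donnelly--Fefferman/Gromov-type argument in which the bounded primitive (Ohsawa potential) produces the spectral gap and the local (semi)positivity of $L$ enters only as a nonnegative curvature term in the Bochner--Kodaira--Nakano identity; this is precisely the content of Ruppenthal's theorem, so citing it closes your gap. The remaining steps (Takegoshi's relative vanishing plus Leray for the direct image, and $L^2$-Serre duality) coincide with what the paper does.
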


\begin{proof}
Also in this case the proof is similar to the one   given for Theorem \ref{sapercohomology} but with some modifications due to the presence of the line bundle $L$. 
These modifications are as follows: first of all we can introduce with self-explanatory notation  the complex of sheaves 
$\{ \cC^{v,q}_{D,g_S,L}\}$ on $\tilde{V}$. As showed in \cite{JRu} we have $H^q(\tilde{V},\cK_{\tilde{V}}(L))=H^q(V,\pi_*\cK_{\tilde{V}}(L))$ where $\cK_{\tilde{V}}(L)$ is the sheaf of holomorphic sections of the holomorphic line bundle $K_{\tilde{V}}\otimes L$. Hence, as in the proof of Th. \ref{sapercohomology}, our purpose now is to show that $\{\pi_* \cC^{v,q}_{D,g_S,L}\}$ is  a fine resolution of $\pi_*(\cK_{\wtV}(L))$. To this aim, using a results proved by Ruppental, see \cite[Theorem 3.2]{JRu}, we know that for K\"ahler metrics satisfying the Ohsawa condition we can extend Th. \ref{L2vanishing} to the $L^2$-$\overline{\pa}$-cohomology of forms with bi-degree $(v,q)$ and  with coefficients in any semipositive Hermitian holomorphic line bundle. Clearly since $L$ is semipositive with respect to $V$ it obeys the conditions of Th. 3.2 in \cite{JRu}. Therefore, with an analogous strategy to  the one used in the proof of Theorem \ref{cohomology}, this vanishing result can in turn be used  in order to show that   $\{\pi_* \cC^{v,q}_{D,g_S,L}\}$ is  a fine resolution of $\pi_*(\cK_{\wtV}\otimes L) $. All this gives us the first isomorphism in \eqref{with-line-bundle}; using the $L^2$-version of Serre duality we get the second isomorphism.
\end{proof}

\medskip
For the next result we begin by recalling that a Hermitian holomorphic line bundle $L$ over 
an irreducible compact complex space $V$ is {\em almost positive} if the curvature form is
semipositive on $\reg(V) $ and positive on an open subset of $\reg (V)$. 

\begin{theorem}\label{theo:vanishing}
Let  $\pi:\wtV\rightarrow V$ be as in Th. \ref{sapercohomology}. Let $L$ be an almost positive Hermitian holomorphic line bundle over $V$. Then for $q>0$ we have 
\begin{equation}\label{vanishing}
H^{v,q}_{2,\overline{\pa}}(V\setminus \sing(V),L, g_S)\cong
H^{0,v-q}_{2,\overline{\pa}}(V\setminus \sing(V), L^*, g_S)=0
\end{equation}
\end{theorem}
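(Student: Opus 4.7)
The plan is to reduce the desired vanishing to a Grauert--Riemenschneider-type statement on the resolution $\tilde V$, via an adaptation of Theorem \ref{theo:with-line-bundle}. The first isomorphism in \eqref{vanishing} is $L^2$-Serre duality (available in this complete-K\"ahler setting, as used in the proof of Corollary \ref{fred2} and Theorem \ref{theo:with-line-bundle}), so the genuine content of the theorem is the vanishing
\[
H^{v,q}_{2,\overline{\pa}}(\reg(V),L,g_S)=0 \quad\text{for } q>0.
\]

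Pull $L$ and its Hermitian metric back via $\pi$ to obtain a Hermitian holomorphic line bundle $\pi^* L$ on $\tilde V$. Because the pullback of a (semi)positive $(1,1)$-form remains (semi)positive, $\pi^* L$ is semipositive on $\tilde V\setminus D$ and, by continuity of the curvature, on all of $\tilde V$; it is moreover strictly positive on $\pi^{-1}(U)$, where $U\subset\reg(V)$ is the open positivity locus of $L$. Hence $\pi^* L$ is almost positive on the compact complex manifold $\tilde V$, and the classical Grauert--Riemenschneider vanishing theorem for almost positive line bundles gives
\[
H^{v,q}_{\overline{\pa}}(\tilde V,\pi^* L)\;=\;H^q\bigl(\tilde V,\cK_{\tilde V}\otimes\pi^* L\bigr)\;=\;0\quad\text{for every } q>0.
\]

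It therefore remains to establish the isomorphism $H^{v,q}_{2,\overline{\pa}}(\reg(V),L,g_S)\cong H^{v,q}_{\overline{\pa}}(\tilde V,\pi^* L)$, mimicking the proof of Theorem \ref{theo:with-line-bundle}. I would form the twisted complex of sheaves $\{\pi_*\cC^{v,q}_{D,g_S,\pi^* L}\}$ on $V$; by construction its global sections compute the left-hand side. Fineness is obtained from the partition-of-unity argument in the proof of Theorem \ref{sapercohomology}, which is unaffected by coefficients in a smooth line bundle, while the identification $\ker(\pi_*\cC^{v,0}_{D,g_S,\pi^* L}\to\pi_*\cC^{v,1}_{D,g_S,\pi^* L})=\pi_*(\cK_{\tilde V}\otimes\pi^* L)$ proceeds exactly as in Theorem \ref{cohomology} via elliptic regularity and the $L^2$-extension theorem across $D$. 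Local exactness at $p\in\sing(V)$ is reduced, as in the proof of Theorem \ref{cohomology}, to the vanishing $H^{v,q}_{2,\overline{\pa}}(\reg(U),\pi^* L,\gamma_U)=0$ for $q>0$, where $U$ is a small neighborhood of $p$ and $\gamma_U$ is the auxiliary complete $d$-bounded K\"ahler metric on $\reg(U)$ constructed there; this vanishing follows from Ruppental's Theorem 3.2 of \cite{JRu} applied to the semipositive line bundle $\pi^*L|_{\pi^{-1}(U)}$ on $(\reg(U),\gamma_U)$. Proposition \ref{hilbertcomparison} combined with the quasi-isometry $\gamma_U\sim g_S$ on $\reg(W)$ for a smaller neighborhood $W\subset U$ then transfers the vanishing back to $g_S$, and the sheaf argument closes.

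The main technical obstacle is that the hypothesis here is strictly weaker than the \emph{semipositive with respect to $V$} hypothesis used in Theorem \ref{theo:with-line-bundle}: one has only global semipositivity of $\pi^* L$ on $\tilde V$ (plus pointwise positivity on one open subset), rather than positivity on each $\pi^{-1}(U_p)$. However, Ruppental's Theorem 3.2 requires only semipositivity of the line bundle together with the Ohsawa condition on the K\"ahler metric, both of which are available on each $\pi^{-1}(U)$ thanks to the construction of $\gamma_U$; this is what allows the proof of Theorem \ref{theo:with-line-bundle} to carry through in the present, weaker setting. Combining the resulting isomorphism with the Grauert--Riemenschneider vanishing above, and finally invoking $L^2$-Serre duality, completes the proof of \eqref{vanishing}.
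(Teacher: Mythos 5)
Your proposal is correct and follows essentially the same route as the paper: pull $L$ back to the almost positive bundle $\pi^*L$ on $\wtV$, obtain $H^{v,q}_{\overline{\pa}}(\wtV,\pi^*L)=0$ for $q>0$ from the Bochner--Kodaira--Nakano inequality (the paper cites \cite{JPDE} 13.3 plus positivity on an open set rather than naming Grauert--Riemenschneider), and then transfer the vanishing via Theorem \ref{theo:with-line-bundle} together with $L^2$-Serre duality. In fact you are more careful than the paper, which simply says the conclusion ``follows immediately by applying Theorem \ref{theo:with-line-bundle}'' without addressing the mismatch between almost positivity and the ``semipositive with respect to $V$'' hypothesis; your observation that Ruppenthal's Theorem 3.2 needs only local semipositivity plus the Ohsawa condition on $\gamma_U$ is exactly the point needed to justify that step.
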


\begin{proof}
Let us define $F:=\pi^*L$. Then $F$ is an almost positive line bundle over $\wtV$. Using the Bochner-Kodaira-Nakano inequality, see \cite{JPDE} 13.3, and the fact that $F$ is positive on an open subset of $\wtV$, we easily get $H^{v,q}_{\overline{\pa}}(\wtV,F)=0$ for $q>0$. Now the conclusion  follows immediately by applying  Theorem \ref{theo:with-line-bundle}.
\end{proof}

\subsection{Negatively curved K\"ahler manifolds with finite volume}
Let $(M,h)$ be a complete K\"ahler manifold with finite volume and pinched negative sectional curvatures $-b^2\leq \sec_h\leq -a^2$ for some constants $0<a\leq b$. An important result concerning the geometry of such manifolds is the one proved in  \cite{SiuYau} by Siu and Yau. This result  provides the existence of a  compactification of $M$ in terms of a complex projective variety with only isolated singularities. More precisely if $(M,h)$ is a K\"ahler manifold as above then there exists a projective variety $V\subset \mathbb{C}\mathbb{P}^n$ with only isolated singularities such that $\reg(V)$ and $M$ are biholomorphic. The purpose of this subsection is to investigate the $L^2$-$\overline{\partial}$-cohomology of such K\"aher manifolds with the help of our Th. \ref{cohomology} and the Siu-Yau compactification. Concerning this task the main result of this subsection reads as follows:
\begin{theorem}
\label{negative}
Let $(M,h)$ be a complete K\"ahler manifold of complex dimension $m$ with finite volume. Assume that the sectional curvatures of $(M,h)$ satisfies  $-b^2\leq \sec_h\leq -a^2$ for some constants $0<a\leq b$. Let $V\subset \mathbb{C}\mathbb{P}^n$ be the Siu--Yau compactification of $M$ and let $\pi:\tilde{V}\rightarrow V$ be a resolution of $V$. Then we have the following isomorphism for each $q=0,...,m$ $$H^{m,q}_{2,\overline{\pa}}(M,h)\cong H^{m,q}_{\overline{\pa}}(\tilde{V}).$$ 
\end{theorem}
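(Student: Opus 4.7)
The plan is to apply Theorem \ref{cohomology} to $(M,h)$ via the Siu--Yau biholomorphism $M\cong\reg(V)$. The theorem then reduces to verifying its two hypotheses: fineness of the sheaves $\pi_*\cC^{m,q}_{D,\sigma}$ on $\tilde V$, and the existence, near each $p\in\sing(V)$, of a locally defined $d$-bounded K\"ahler metric quasi-isometric to $h$. A decisive simplification is that the Siu--Yau compactification produces a projective variety $V$ with \emph{only isolated singularities}, so both hypotheses admit a local treatment at a finite set of points.

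First I would dispose of the fineness hypothesis by invoking Lemma \ref{finecriterion}. Since $\sing(V)$ is a finite set of points $\{p_1,\dots,p_N\}$, any open cover $\cU=\{U_i\}$ of $V$ can be refined so that a distinguished index $i(k)$ picks out a small neighborhood of $p_k$. I would choose a partition of unity $\{\lambda_j\}$ subordinate to this refinement whose elements are identically constant on an even smaller neighborhood of each $p_k$ and smooth on $\reg(V)\cong M$. The differentials $d(\lambda_j|_{\reg(V)})$ are then compactly supported away from $\sing(V)$, hence lie in $L^{\infty}\Omega^1(\reg(V),h)$ trivially, and the two bullets of Lemma \ref{finecriterion} are verified.

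The main obstacle is verifying the second hypothesis, i.e.\ producing a $d$-bounded K\"ahler model quasi-isometric to $h$ near each cusp. Here I would exploit the asymptotic description of complete K\"ahler metrics of finite volume with pinched negative sectional curvature. Under the Siu--Yau compactification, each $p_k\in\sing(V)$ corresponds to a cusp end of $M$, and the metric $h$ in a punctured neighborhood of $p_k$ is quasi-isometric to a standard model of Bergman/Poincar\'e/Saper type on a small punctured ball $B^*\subset\bbC^n$; such a model can be taken with K\"ahler form $-\sqrt{-1}\,\partial\overline{\partial}\log(c^2-|z|^2)$ (Bergman on the ball) or of the Saper form used in Prop.\ \ref{Ohsawasaper}. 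Both models satisfy the Ohsawa condition (cf.\ \cite{PS} p.\ 613 and \cite{GMM}), and the Ohsawa condition forces $d$-boundedness. So once one quotes the asymptotic geometry of the cusps from \cite{SiuYau} to produce a local biholomorphism of a neighborhood of $p_k$ in $V$ with a punctured neighborhood in $\bbC^n$ and to ensure quasi-isometry of $h$ to the chosen model, the hypothesis is met.

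Combining the two verifications, Theorem \ref{cohomology} applies and yields the isomorphism
\[
H^{m,q}_{2,\overline{\pa}}(M,h)\cong H^{m,q}_{\overline{\pa}}(\tilde V)
\]
for every $q=0,\dots,m$. The delicate point, and the one I would expect to consume the bulk of the argument, is writing down the local biholomorphic chart near each cusp together with the explicit quasi-isometry estimate between $h$ and the Bergman/Saper model; all other steps are either formal consequences of Theorem \ref{cohomology} or elementary partition-of-unity manipulations.
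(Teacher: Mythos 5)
Your overall strategy --- reduce to Theorem \ref{cohomology}, use the fact that the Siu--Yau compactification has only isolated singularities to build a partition of unity that is locally constant near $\sing(V)$ (so Lemma \ref{finecriterion} gives fineness), and then verify the local $d$-boundedness hypothesis at each cusp --- is the right one, and your treatment of the fineness condition agrees with the paper's.

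However, there is a genuine gap in your verification of the second hypothesis of Theorem \ref{cohomology}. You assert that near each singular point the metric $h$ is quasi-isometric to a Bergman or Saper-type model on a punctured ball, and you defer exactly this quasi-isometry estimate as ``the delicate point.'' That claim is not established anywhere, is not supplied by \cite{SiuYau} (which gives the compactification but not a quasi-isometric normal form for the metric on the ends), and is doubtful in general: a finite-volume cusp of a manifold with variably pinched negative curvature has the geometry of a horoball quotient, which there is no reason to expect to be quasi-isometric to the Bergman metric of a ball near a boundary point, nor to a Saper-type metric. So the load-bearing step of your argument is missing. The paper avoids this entirely by quoting Lemma 3.2 of Yeganefar \cite{NY}: there is a compact set $K\subset M$ and a \emph{bounded} continuous $1$-form $\theta$ with $d\theta=\omega$ on $M\setminus K$, where $\omega$ is the K\"ahler form of $h$. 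Since the paper's Definition \ref{bound} only requires the primitive to be bounded and $d\eta$ is taken distributionally, this says that $h$ itself is already $d$-bounded on a punctured neighborhood of each $p\in\sing(V)$; one then takes $g_U=h|_{\reg(U)}$ in the hypothesis of Theorem \ref{cohomology}, and no comparison with an external model metric is needed. Replacing your quasi-isometry claim with this citation closes the gap; without it, the proof is incomplete.
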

\begin{proof}
Let $\psi:M\rightarrow \reg(V)$ be a biholomorphism between $M$ and $\reg(V)$. Let us label by $\upsilon$ the K\"ahler metric $(\psi^{-1})^*h$. Henceforth we will indentify $(M,h)$ and $(\reg(V),\upsilon)$. According to \cite{NY} Lemma 3.2 we know that there exists a compact subset $D\subset M$ and a bounded continuous  $1$-form $\theta$ such that on $M\setminus D$ we have $d\theta=\omega$ where $\omega$ is the K\"ahler form of $h$. Hence the second condition in the statement of Th. \ref{cohomology} is fulfilled. We are left to show that the sheaves $\{\pi_*\cC^{m,q}_{D,\eta}, q\geq 0\}$ are fine where $D\subset \tilde{V}$ is the divisor with only normal crossings given by $D=\pi^{-1}(\sing(V))$ and $\eta:=(\pi|_{\tilde{V}\setminus D})^*\upsilon$.  Let $\cU:=\{U_i\}_{i\in I}$ be an open cover of $V$.  Since $V$ is compact there exists a finite open cover of $V$, $\cW:=\{W_1,...,W_r\}$ for some positive integer $r$, such that $\cW$ is subordinate to $\cU$ and such that for any $i,j\in \{1,...,r\}$ with $i\neq j$ we have $\sing(V)\cap W_i\cap W_j=\emptyset$. Now we can easily construct a partition of unity $\{\phi_1,...,\phi_r\}$ subordinated to $\cW$ such that the following properties hold:
\begin{itemize}
\item $\phi_i:V\rightarrow [0,1]$ is continuous for each $i=1,...,r$
\item $\phi_i|_{\reg(V)}:\reg(V)\rightarrow [0,1]$ is smooth for each $i=1,...,r$
\item if $p\in \sing(V)\cap \supp(\phi_i)$ then there exists a neighborhood $A$ of $p$ which is open in $V$ and  such that $\phi_i|_A=1$.
\end{itemize}
It is immediate to check that $\|d\phi_i\|_{L^{\infty}\Omega^1(\reg(V),\upsilon)}<\infty$. Hence by Lemma \ref{finecriterion} we can conclude that $\{\pi_*\cC^{m,q}_{D,\eta}, q\geq 0\}$ is a complex of fine sheaves. The theorem is thus established.
\end{proof}
 
\noindent
We have now some direct applications of Th. \ref{negative}.
\begin{corollary}
In the setting of Th. \ref{negative}. Then Corollaries \ref{fred1}--\ref{fred4} hold for $(M,h)$.
\end{corollary}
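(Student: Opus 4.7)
The approach is to observe that the setting of Theorem \ref{negative} already satisfies every hypothesis needed to invoke the four corollaries of Theorem \ref{cohomology} directly, so that essentially no new analytic input is required beyond the verifications already carried out in the proof of Theorem \ref{negative}. Specifically, the proof of Theorem \ref{negative} shows that $(M,h)$ together with the Siu--Yau compactification $V$ and a resolution $\pi:\widetilde V\to V$ fulfills both hypotheses of Theorem \ref{cohomology}: the sheaves $\pi_*\cC^{m,q}_{D,\eta}$ are fine (via Lemma \ref{finecriterion}), and locally near each singular point of $V$ the metric $h$ is quasi-isometric to a $d$-bounded K\"ahler metric. Consequently $H^{m,q}_{2,\overline{\pa}}(M,h)\cong H^{m,q}_{\overline{\pa}}(\widetilde V)$ for all $q$, and in particular these groups are finite dimensional.

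First I would deduce the analogue of Corollary \ref{fred1}: since $H^{m,q}_{2,\overline{\pa}}(M,h)$ is finite dimensional for every $q$, Theorem 2.4 of \cite{BLHC} applied to the complete Hermitian manifold $(M,h)$ yields that the unique closed extension of $\overline{\pa}_m+\overline{\pa}_m^{t}$ on $\Omega^{m,\bullet}_c(M)$ is Fredholm on its domain with the graph norm. This uses only completeness of $h$ and finite dimensionality of the $L^2$-$\overline{\pa}$-cohomology in bi-degrees $(m,\bullet)$, both of which are in place.

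Next I would establish the analogue of Corollary \ref{fred2} by invoking the $L^2$-version of Serre duality (available for complete Hermitian manifolds, as in \cite{JRu}), which gives $H^{m,q}_{2,\overline{\pa}}(M,h)\cong H^{0,q}_{2,\overline{\pa}}(M,h)$. Combined with Theorem \ref{negative} and the classical Serre duality on the compact complex manifold $\widetilde V$, one obtains $H^{0,q}_{2,\overline{\pa}}(M,h)\cong H^{0,q}_{\overline{\pa}}(\widetilde V)$ and hence the Euler characteristic identity $\chi(\widetilde V,\mathcal O_{\widetilde V})=\chi_2(M,h)$. The analogue of Corollary \ref{fred3} is then immediate by repeating the Fredholm argument above in bi-degree $(0,\bullet)$, using the finite dimensionality just obtained.

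Finally, to get the analogue of Corollary \ref{fred4}, I would use the fact that $h$ is K\"ahler, so the proof of Corollary \ref{fred4} applies verbatim: the Hodge $*$-operator unitarily identifies the closed extension of $\Delta_{m,q,\overline{\pa}}$ with that of $\Delta_{m-q,0,\pa}$, K\"ahlerness gives $\Delta_{m-q,0,\pa}=\Delta_{m-q,0,\overline{\pa}}$, completeness reduces this to $\overline{\pa}^*_{m-q,0}\circ\overline{\pa}_{m-q,0}$, and Fredholmness yields both the finite dimensionality of $H^{m-q,0}_{2,\overline{\pa}}(M,h)$ and the closed range of $\overline{\pa}_{m-q,0}$. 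No step should present a genuine obstacle, since every analytic tool (completeness, K\"ahlerness, Theorem \ref{cohomology}, $L^2$-Serre duality, Theorem 2.4 of \cite{BLHC}) is directly available; the mildest care point is simply to record that the Fredholm/closed-range arguments of Corollaries \ref{fred1}--\ref{fred4} were written using only the conclusion of Theorem \ref{cohomology} together with completeness and K\"ahlerness of the metric, so they transport without modification to $(M,h)$.
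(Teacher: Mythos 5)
Your proposal is correct and follows exactly the route the paper intends: the paper states this corollary without proof precisely because the proof of Theorem \ref{negative} already verifies both hypotheses of Theorem \ref{cohomology} for $(M,h)$ with its Siu--Yau compactification, and Corollaries \ref{fred1}--\ref{fred4} were derived using only the conclusion of Theorem \ref{cohomology} together with completeness (and, for Corollary \ref{fred4}, K\"ahlerness) of the metric, all of which hold here. Your write-up simply makes explicit what the paper leaves implicit, with no deviation in method.
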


\begin{corollary}
Let $(M,h)$ and $(N,g)$ be as in Th. \ref{negative}. Let $V\subset \mathbb{C}\mathbb{P}^s$, $W\subset \mathbb{C}\mathbb{P}^r$ be the corresponding  Siu-Yau compactification. Assume that $V$ and $W$ are birationally equivalent. Then for each $q=0,...,m$ we have 
\begin{equation*}
 H^{m,q}_{2,\overline{\partial}}(M,h)\cong H^{m,q}_{2,\overline{\partial}}(N,g)\\
 \quad \quad \quad H^{0,q}_{2,\overline{\partial}}(M,h)\cong H^{0,q}_{2,\overline{\partial}}(N,g)
\end{equation*}
\end{corollary}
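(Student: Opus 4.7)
The proof plan is to mimic exactly the strategy of Cor.~\ref{bir}, with Th.~\ref{negative} playing the role that Th.~\ref{cohomology} played there. First, I would apply Th.~\ref{negative} separately to the complete K\"ahler manifolds $(M,h)$ and $(N,g)$. Denote by $\pi:\tilde V\to V$ and $p:\tilde W\to W$ the resolutions appearing in the statement. Then Th.~\ref{negative} yields
\begin{equation*}
H^{m,q}_{2,\overline{\partial}}(M,h)\cong H^{m,q}_{\overline{\partial}}(\tilde V),\qquad
H^{m,q}_{2,\overline{\partial}}(N,g)\cong H^{m,q}_{\overline{\partial}}(\tilde W),
\end{equation*}
for each $q=0,\dots,m$. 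This reduces the first claimed isomorphism to an assertion about the classical Dolbeault cohomology of two smooth compact complex manifolds.

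Next I would observe that since $V$ and $W$ are birationally equivalent complex projective varieties, their resolutions $\tilde V$ and $\tilde W$ are bimeromorphic smooth projective manifolds. The Dolbeault groups $H^{m,q}_{\overline{\partial}}(\tilde V)\cong H^q(\tilde V,K_{\tilde V})$ and $H^q(\tilde V,\mathcal{O}_{\tilde V})\cong H^{0,q}_{\overline{\partial}}(\tilde V)$ are classical birational invariants of smooth projective varieties (equivalently, the Hodge numbers $h^{n,q}$ and $h^{0,q}$ are bimeromorphic invariants of compact K\"ahler manifolds). Hence
\begin{equation*}
H^{m,q}_{\overline{\partial}}(\tilde V)\cong H^{m,q}_{\overline{\partial}}(\tilde W),\qquad
H^{0,q}_{\overline{\partial}}(\tilde V)\cong H^{0,q}_{\overline{\partial}}(\tilde W),
\end{equation*}
and combining this with the above display gives the first isomorphism.

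For the second isomorphism, concerning $(0,q)$-forms, I would invoke Cor.~\ref{fred2}. Its proof uses only Th.~\ref{cohomology} together with the $L^2$-Serre duality of \cite{JRu}; since the proof of Th.~\ref{negative} verifies that the hypotheses of Th.~\ref{cohomology} hold for $(M,h)$ and $(N,g)$ viewed as complete Hermitian metrics on $\reg(V)$ and $\reg(W)$ respectively, Cor.~\ref{fred2} applies verbatim and gives
\begin{equation*}
H^{0,q}_{2,\overline{\partial}}(M,h)\cong H^{0,q}_{\overline{\partial}}(\tilde V),\qquad
H^{0,q}_{2,\overline{\partial}}(N,g)\cong H^{0,q}_{\overline{\partial}}(\tilde W).
\end{equation*}
The birational invariance of $H^{0,q}_{\overline{\partial}}$ on $\tilde V$ and $\tilde W$ noted above then yields the second isomorphism.

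There is no real obstacle; the only content is the combination of Th.~\ref{negative} and Cor.~\ref{fred2} with the standard bimeromorphic invariance of $h^{m,q}$ and $h^{0,q}$, and the delicate work (the existence of the Siu--Yau compactification, the fine sheaf criterion, and the $d$-boundedness argument for the model K\"ahler metric on the ball) has already been carried out in the proof of Th.~\ref{negative}. If anything, the only point that warrants a line of justification is why $\tilde V$ and $\tilde W$ can be taken projective (so that the classical birational invariance results apply as stated), which follows from Hironaka's theorem applied to the projective varieties $V\subset\mathbb{CP}^s$ and $W\subset\mathbb{CP}^r$.
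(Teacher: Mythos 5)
Your proposal is correct and follows essentially the same route as the paper: the paper's proof simply cites Cor.~\ref{bir}, whose own argument is exactly what you spell out (pass to resolutions via Th.~\ref{negative}/Th.~\ref{cohomology}, use bimeromorphic invariance of $H^{m,q}_{\overline{\partial}}$ and $H^{0,q}_{\overline{\partial}}$ of the smooth models, and invoke Cor.~\ref{fred2} with $L^2$-Serre duality for the $(0,q)$ case).
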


\begin{proof}
This follows from Cor. \ref{bir}.
\end{proof}

\begin{thebibliography}{99}

\bibitem{FB}
F{.} Bei,
\newblock Sobolev spaces and Bochner Laplacian on complex projective varieties and stratified pseudomanifolds.
\newblock  \emph{Journal of Geometric Analysis} 27 2017, Issue 1, pp 746--796

\bibitem{BM}
E{.} Bierstone; P{.} D{.} 
\newblock Canonical desingularization in characteristic zero by blowing up the maximum strata of a local invariant. 
\newblock {\em Invent. Math.} 128 (1997), no. 2, 207--302.

\bibitem{BLHC}
J{.} Br\"uning, M{.} Lesch, 
\newblock  Hilbert complexes.
\newblock {\em J. Funct. Anal.} 108 (1992), no. 1, 88--132.

\bibitem{BPS}
J{.} Br\"uning, N{.} Peyerimhoff, H{.} Schr\"oder,
\newblock The $\overline{\partial}$-operator on algebraic curves.
\newblock {\em Comm. Math. Phys.}, 129 (1990), no. 3, 525--534.

\bibitem{CGM}
J{.} Cheeger, M{.} Goresky, R{.} MacPherson.
\newblock $L^2$-cohomology and intersection homology of singular algebraic varieties. 
\newblock Seminar on Differential Geometry, pp. 303--340, Ann. of Math. Stud., 102, Princeton Univ. Press, Princeton, N.J., 1982.


\bibitem{JPDE}
J{.}P{.} Demailly.
\newblock “L2 Hodge theory and vanishing theorems” in Introduction to Hodge Theory.
\newblock SMF/AMS Texts Monogr. 8, Amer. Math. Soc., Providence, 2002, 1--95. (2901,
2904, 2905, 2908, 2909)

\bibitem{DF}
H{.} Donnelly, C{.} Fefferman.
\newblock $L^2$-cohomology and index theorem for the Bergman metric.
\newblock {\em Ann. Math.}, (2) 118 (1983), 593--618.

\bibitem{GFI}
G{.} Fischer.
\newblock Complex analytic geometry. 
\newblock Lecture Notes in Mathematics, Vol. 538. Springer-Verlag, Berlin-New York, 1976

\bibitem{Gor}
W{.} B{.} Gordon.
\newblock An analytical criterion for the completeness of Riemannian manifolds.
\newblock {\em Proc. Amer. Math.} Soc. 37 (1973), 221--225.

\bibitem{GMM}
C{.} Grant Melles. P{.} Milman.
\newblock Classical Poincar\'e metric pulled back off singularities using a Chow-type theorem and desingularization. 
\newblock {\em Ann. Fac. Sci. Toulouse Math.} (6) 15 (2006), no. 4, 689--771.

\bibitem{GMMI}
C{.} Grant Melles. P{.} Milman.
\newblock Metrics for singular analytic spaces. 
\newblock {\em Pacific J. Math.},  168 (1995), no. 1, 61--156.

\bibitem{GRRE}
H{.} Grauert; R{.} Remmert.
\newblock Coherent analytic sheaves. Grundlehren der Mathematischen Wissenschaften [Fundamental Principles of Mathematical Sciences], 265. 
\newblock Springer-Verlag, Berlin, 1984

\bibitem{GRI}
H. Grauert, O. Riemenschneider.
\newblock Verschwindungss\"atze f\"ur analytische
Kohomologiegruppen auf komplexen R\"umen.
\newblock {\em Invent. Math.},  11 (1970), 263--292.

\bibitem{MGR}
M{.} Gromov.
\newblock K\"ahler hyperbolicity and $L^2$-Hodge theory.
\newblock {\em J. Differential Geom}. 33 (1991), no. 1, 263–292.

\bibitem{PH}
P{.} Haskell.
\newblock $L^2$-Dolbeault complexes on singular curves and surfaces. 
\newblock {\em Proc. Amer. Math. Soc.} 107 (1989), no. 2, 517--526.

\bibitem{HH}
H{.} Hironaka.
\newblock Resolution of singularities of an algebraic variety over a field of characteristic zero. I, II. 
\newblock {\em Ann. of Math.} (2) 79 (1964), 109--2.03


\bibitem{Mac}
R{.} MacPherson
\newblock Global questions in the topology of singular spaces. 
\newblock Proceedings of the International Congress of Mathematicians, Vol. 1, 2 (Warsaw, 1983), 213--235, PWN, Warsaw, 1984.

\bibitem{Takeo}
T{.} Ohsawa
\newblock Hodge spectral sequence on compact K\"ahler spaces.
\newblock {\em Publ. Res. Inst. Math. Sci.},  23 (1987), 265--274.

\bibitem{P}
W{.} Pardon.
\newblock The $L^2$-$\overline{\partial}$-cohomology of an algebraic surface. 
\newblock {\em Topology},  28 (1989), no. 2, 171--195.

\bibitem{PS}
W{.} Pardon, M{.} Stern.
\newblock $L^2$-$\overline{\partial}$-cohomology of complex projective varieties.
\newblock {\em J. Amer. Math. Soc.},  4 (1991), no. 3, 603--621.

\bibitem{JRu}
J{.} Ruppenthal.
\newblock $L^2$-theory for the $\overline{\partial}$-operator on compact complex spaces.
\newblock {\em Duke Math. J.} 163 (2014), no. 15, 2887--2934.

\bibitem{LS}
L{.} Saper.
\newblock $L^2$-cohomology of K\"ahler varieties with isolated singularities.
\newblock {\em J. Differential Geom.},  36 (1992), no. 1, 89--161.

\bibitem{SiuYau}
Y{.} T{.} Siu, S.T. Yau.
\newblock  Compactification of negatively curved complete Kaehler manifolds of finite volume, Semin. 
\newblock Differential Geometry, Ann. Math. Stud. 102 (1982), 363--380

\bibitem{TaKe}
K{.} Takegoshi. 
\newblock Relative vanishing theorems in analytic spaces.
\newblock {\em Duke Math. J.} 52 (1985), no. 1, 273--279. 

\bibitem{NY}
N{.} Yeganefar.
\newblock $L^2$-cohomology of negatively curved K\"ahler manifolds of finite volume. 
\newblock {\em Geom. Funct. Anal.} 15 (2005), no. 5, 1128--1143.

\end {thebibliography}


\end{document}